\newtheorem{thm}{Theorem}
\newtheorem{lem}[thm]{Lemma}
\newtheorem{claim}{Claim}
\theoremstyle{definition}
\renewcommand\proofname{\it Proof}
\begin{document}

	\title{\bf Tur\'{a}n problem for $C_{2k+1}^{-}$-free signed graph}
	\author{{Junjie Wang$^{a}$, Yaoping Hou$^{b,}$\footnote{Corresponding author.}\setcounter{footnote}{-1}\footnote{\emph{E-mail address:} yphou@hunnu.edu.cn}, Xueyi Huang$^{a}$}\\[2mm]
		\small $^{a}$School of Mathematics, East China University of Science and Technology,\\
		\small Shanghai 200237, China \\
		\small $^{b}$College of Mathematics and Statistics, Hunan Normal University,\\
		\small Changsha,  Hunan, 410081, P. R. China}

	\date{}
	\maketitle
	{\flushleft\large\bf Abstract } Let $C_{l}^-$ denote a negative cycle of length $l$ in a signed graph. In this paper, we  determine the maximum number of edges among all unbalanced signed graphs of order $n$ with no sugraph switching equivalent to  $C_{2k+1}^-$, where $3\le k\le n/10-1$, and characterize the extremal graphs. As a by-product, we also obtain the maximum spectral radius among these graphs.
	
	\begin{flushleft}
		\textbf{Keywords:} Signed graph; Tur\'{a}n problem; odd cycle.
	\end{flushleft}

	\section{Introduction}
	All graphs considered in this paper are simple and undirected. Let $G$ be a graph with vertex set $V(G)$ and edge set $E(G)$, and let  $e(G)=|E(G)|$. For any $v\in V(G)$, let $N_G(v)$ (or $N(v)$ for short) denote the set of vertices adjacent to $v$ in $G$. For any $e \in E(G)$, if $u$ and $v$ are the two endpoints of $e$, then we can also write $e=\{ uv \}$.  For any two disjoint subsets $S$ and $T$ of $V(G)$, we denote by $E(S,T)$ and $\overline{E}(S,T)$) the set of edges  between $S$ and $T$ in $G$ and in the complement graph of $G$, respectively. Also, let $G[S]$ denote the subgraph of $G$ induced by $S$. The \textit{clique number} of $G$, denoted by $\omega(G)$, is the maximum order of a complete subgraph of $G$. 
	Given two graphs $G_1$ and $G_2$ with $v_1 \in V(G_1)$ and $v_2 \in V(G_2)$, the \textit{coalescence}  of  $G_1$ and $G_2$ with respect to $v_1$ and $v_2$, denote by $G_1 \cdot G_2$, is the graph obtained from $G_1\cup G_2$ by identifying the vertex $v_1$ of $G_1$ with the vertex $v_2$ of $G_2$. 
	
	The \textit{signed graph} $\dot{G}=(G,\sigma)$ is a graph $G$ whose edges get signs $+1$ or $-1,$  where $G$ is called the underlying graph and $\sigma: E(G)\rightarrow  \{+1,-1\}$ is a 
	sign function. The edge $uv$ is \emph{positive} (resp. \emph{negative}) if $uv$ gets  sign $+1$ (resp. $-1$) and denoted by $u\mathop{\sim}\limits^{+} v$ (resp. $u\mathop{\sim}\limits^{-} v$). If all edges  get signs $+1$ (resp. $-1$), then $\dot{G}$ is called \emph{all-positive} (resp. \emph{all-negative}) and  denoted by $(G,+)$  (resp. $(G,-)$). The sign of a cycle $\dot{C}$ is defined by  $\sigma(\dot{C}) = \prod_{e \in E(C)}\sigma (e)$. If $\dot{C}=+1$ (resp. $\dot{C}=-1$), then $\dot{C}$ is called positive (resp. negative). A positive (resp. negative) cycle of length $k$ is denoted by $C_{k}^{+}$ (resp. $C_{k}^{-}$).
	
	The \emph{adjacency matrix} of the signed graph $\dot{G}=(G,\sigma)$ is defined by $A(\dot{G})=(\sigma_{ij}),$ where 
	$\sigma_{ij} =\sigma(v_iv_j)$ if $v_i \sim v_j,$ and $\sigma_{ij} = 0$ otherwise. The eigenvalues of $A(\dot{G})$ are called the \textit{eigenvalues} of $\dot{G}$, and denoted by $\lambda_1(\dot{G}) \ge \lambda_2(\dot{G}) \ge \cdots \ge \lambda_n(\dot{G})$. The \emph{spectral radius} of $\dot{G}$ is defined by $\rho(\dot{G}) =\max \{|\lambda_1(\dot{G})|, |\lambda_2(\dot{G})|, \ldots, |\lambda_n(\dot{G})| \}= \max \{\lambda_1(\dot{G}), -\lambda_n(\dot{G}) \}$.
	
	Since graphs are exactly signed graphs with only positive edges, the properties of graphs can also be considered in terms of signed graphs. On the other hand,  signed graphs can be also used to the structural properties of graphs. For example,  Huang  \cite{H19} completely solved the Sensitivity Conjecture by using the spectral property of signed hypercubes.  For more results about the spectral theory of signed graphs, we refer the reader to \cite{A19,BCKW18,KP,KS17}.

	An important feature of signed graphs is the concept of \emph{switching} the signature. For a subset $U$ of  $V(\dot{G})$,  let $\dot{G}_{U}$ denote the signed graph obtained from $\dot{G}$ by reversing the sign of each edge between $U$ and  $V(\dot{G})\setminus U$. Then we say that $\dot{G}$ and $\dot{G}_U$ are \textit{switching equivalent}, and write $\dot{G} \sim \dot{G}_{U}$. A signed graphs is called \textit{balanced} if it is switching equivalent to a all-positive signed graph. For more basic results on signed graphs, we refer to \cite{Z82,Z08}. 
	
	Let $H$ be a simple graph. A graph $G$ is $H$-\textit{free} if there is no subgraph of $G$ isomorphic to $H$. The \emph{Tur$\acute{a}$n number} $ex(n,H)$ is the maximum number of edges in a $H$-free graph of order $n$. 
	In extremal graph theory, how to determine the  exact value of $ex(n,H)$ is a basic problem, which is also known as the \emph{Tur\'{a}n   problem}. There are many classic results on the \emph{Tur\'{a}n problem}. For example, Mantel (see, e.g., \cite{B78}) gave the exact value of  $ex(n,K_3)$, and  Tur\'{a}n \cite{T54} determined the exact value of $ex(n,K_r)$ for $r\geq 3$. Additionally, the Tur\'{a}n number of odd cycles was determined in \cite{B71,Bo71,WDR,F15}, and the Tur\'{a}n number of even cycles was studied in \cite{B74, F96, F06}. For more results on Tur\'{a}n problem, we refer the readers to the survey \cite{S13}.

	Let $\dot{H}$ be a signed graph. A signed graph $\dot{G}$ is $\dot{H}$-\textit{free} if it has no subgraph switching equivalent to $\dot{H}$. Very recently, Wang, Hou and Li \cite{WHL22} considered the Tur\'{a}n problem for signed graphs.  Let $\dot{G}_{s,t}$  ($s+t=n-2$) be the signed graph obtained from an all-positive clique $(K_{n-2}, +)$ with $V(K_{n-2})=\{u_1, \ldots , u_s, v_1, \ldots, v_t\}$ ($s,t \ge 1$) and two isolated vertices $u$ and $v$ by adding a negative edge $\{uv\}$ and positive edges $\{uu_1 \}, \ldots ,\{uu_s\}, \{vv_1\}, \ldots , \{vv_t\}$ (see Figure \ref{fig1}).  In \cite{WHL22}, 
	the authors determined the maximum number of edges among all $C_3^-$-free connected unbalanced signed graphs of order $n$, and identified the extremal graphs.
	

	\begin{thm}(\cite{WHL22})\label{thm::0}
		Let $\dot{G}=(G,\sigma)$  be a connected unbalanced  signed graph of order $n$.
		If  $\dot{G}$ is  $C_3^-$-free, then  $$e(\dot{G})\le \frac{n(n-1)}{2}-(n-2),$$ with equality holding if and only if $\dot{G}\sim \dot{G}_{s,t},$ where $s+t=n-2$ and $s,t\ge 1$.
	\end{thm}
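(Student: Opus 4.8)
The plan is to first translate the hypothesis into a purely structural condition. Since the sign $\sigma(\dot C)$ of a cycle is invariant under switching, a triangle is switching equivalent to $C_3^-$ if and only if it is itself negative; hence $\dot G$ being $C_3^-$-free is equivalent to the statement that \emph{every triangle of $\dot G$ is positive}. Writing $\overline G$ for the complement of the underlying graph $G$, the inequality $e(\dot G)\le \binom n2-(n-2)$ is equivalent to $e(\overline G)\ge n-2$, so it suffices to show that a connected unbalanced signed graph in which every triangle is positive has at least $n-2$ non-edges.

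For the bound I would argue by contradiction. Suppose $e(\overline G)\le n-3$. A graph on $n$ vertices with at most $n-3$ edges has at least three connected components, so $V(G)$ splits into parts $V_1\sqcup\cdots\sqcup V_c$ with $c\ge 3$ such that $G$ is complete between distinct parts; that is, the underlying graph is a join of $c\ge 3$ pieces and contains the complete multipartite graph $K_{V_1,\dots,V_c}$ as a spanning subgraph. The key step is then a balance lemma: \emph{a signed complete multipartite graph with at least three parts in which every triangle is positive is balanced.} I would prove this by fixing a reference vertex $o\in V_1$ and a vertex $o'\in V_2$, defining a switching (equivalently, a $\pm1$ labelling of the vertices) from the signs of the edges $ox$ for $x\notin V_1$ and $o'x$ for $x\in V_1$, and checking that after switching every edge becomes positive; the only edges not directly pinned down lie between $V_1$ and $V_2$, and there the verification uses a vertex of a third part to triangulate (this is exactly where $c\ge 3$ enters). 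Applying this lemma first to the cross-edges, and then noting that every within-part edge $xy$ lies in a positive triangle $xyz$ with $z$ in another part (which forces $\sigma(xy)=+$), one concludes that $\dot G$ would be balanced --- contradicting the hypothesis. Hence $e(\overline G)\ge n-2$.

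For the equality case I would analyze $e(\overline G)=n-2$. The component estimate forces $\overline G$ to have at least two components, while the balance lemma rules out three or more; thus $\overline G$ has exactly two components, and since $e(\overline G)=n-2=n-c$ with $c=2$, each component is a tree. So $V=V_1\sqcup V_2$, the graph $G$ is complete between $V_1$ and $V_2$, and $G[V_i]=\overline{T_i}$ for a tree $T_i$. The crux is to show that each $T_i$ must be a star. Because the cross graph $K_{V_1,V_2}$ is bipartite and hence triangle-free, the unbalance of $\dot G$ must come from a $4$-cycle $x\,y\,x'\,y'$ (with $x,x'\in V_1$, $y,y'\in V_2$) whose sign is not forced positive by triangles; telescoping the triangles $z_{i-1}z_i y$ and $z_{i-1}z_i y'$ along a path in $G[V_i]$ shows that such a cycle is forced positive whenever $x,x'$ lie in the same component of $G[V_1]$ or $y,y'$ lie in the same component of $G[V_2]$. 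Hence unbalance requires both $G[V_1]$ and $G[V_2]$ to be disconnected, and $\overline{T_i}$ is disconnected exactly when $T_i$ is a star. This makes $\overline G$ a disjoint union of two stars, i.e. $G$ is the underlying graph of $\dot G_{s,t}$; a final check that the single ``free'' $4$-cycle through the two star-centers spans a one-dimensional space of cycle classes modulo triangles then shows the signature is unique up to switching, so $\dot G\sim \dot G_{s,t}$.

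The main obstacle is the equality characterization rather than the bound itself: one must control \emph{which} dense signed graphs with all triangles positive can fail to be balanced, which amounts to understanding when the triangles generate the cycle space. The step that both complement-trees must be stars --- equivalently, that every cross $4$-cycle is triangle-generated unless both within-part graphs are disconnected --- is the part requiring the most care, and the balance lemma for at least three parts is the technical engine that both delivers the edge bound and collapses the case analysis in the equality argument.
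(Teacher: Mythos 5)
First, a point of order: this paper never proves Theorem \ref{thm::0} --- it is quoted from \cite{WHL22} and used as a black box --- so there is no in-paper proof to compare your argument against; what follows is an assessment of your proposal on its own merits. Your route is correct. The translation of $C_3^-$-freeness into ``every triangle is positive,'' the component count showing that $e(\overline G)\le n-3$ would force the complement to have at least three components (so that $G$ contains a spanning complete multipartite graph with at least three parts), and your balance lemma for such signed graphs are all sound; together with the remark that every within-part edge lies in a triangle with a vertex of another part, this gives the bound. The equality analysis is also correct: exactly two complement components, both trees; unbalance forces both $G[V_1]$ and $G[V_2]$ to be disconnected (telescoping triangles along paths); the complement of a tree is disconnected precisely when the tree is a star; and the cycle space modulo the triangle span is one-dimensional for $G_{s,t}$, which pins the signature up to switching and yields $\dot G\sim\dot G_{s,t}$ with $s,t\ge 1$. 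Two steps must be made explicit in a full write-up, though neither is a gap in the plan. (i) Your sentence ``the unbalance must come from a cross $4$-cycle'' tacitly uses that triangles together with the $4$-cycles of $K_{V_1,V_2}$ generate the entire cycle space of $G$; this is true (given a cycle, replace each within-part edge $xx'$ by the path $xy_0x'$ through a fixed vertex $y_0$ of the other side, at the cost of the triangle $xx'y_0$), but it is the load-bearing fact behind that sentence. (ii) The final uniqueness step should be anchored to the standard correspondence between switching classes and the signs of cycles (Lemma \ref{lem::1} supplies this via a spanning tree), combined with your observation that no triangle of $G_{s,t}$ contains the edge $uv$, so the triangle span misses exactly one cycle class, which must carry the sign $-1$ since $\dot G$ is unbalanced. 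Finally, your explicit two-reference-vertex labelling in the balance lemma has a harmless consistency wrinkle at the pair $o,o'$; it is cleaner to switch so that all edges at $o$ become positive, deduce that all edges between parts other than $V_1$ are positive via triangles through $o$, then use a third part to show that each $x\in V_1$ has constant sign toward $V(G)\setminus V_1$ and switch the negative ones.
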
 
	
	Based on Theorem \ref{thm::0}, the authors in \cite{WHL22} also obtained the maximum spectral radius among all $C_3^-$-free connected unbalanced signed graphs of order $n$, and characterized the extremal graphs.
	
	\begin{thm}(\cite{WHL22})\label{thm::00}
		Let $\dot{G}=(G,\sigma)$  be  a connected  unbalanced signed graph of order $n$.
		If  $\dot{G}$ is  $C_3^-$-free, then  $$\rho(\dot{G})\le \frac{1}{2}( \sqrt{ n^2-8}+n-4),$$ with equality holding if and only if $\dot{G}\sim \dot{G}_{1,n-3}.$
	\end{thm}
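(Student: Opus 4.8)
The plan is to deduce the spectral bound from the edge-extremal characterization in Theorem~\ref{thm::0}, by first reducing an arbitrary spectral maximizer to the family $\dot{G}_{s,t}$ and then optimizing the spectral radius over that family. I first record the lower bound by computing $\rho(\dot{G}_{1,n-3})$ directly. The graph $\dot{G}_{1,n-3}$ is connected, contains the negative $4$-cycle $u\mathop{\sim}\limits^{-}v\mathop{\sim}\limits^{+}v_1\mathop{\sim}\limits^{+}u_1\mathop{\sim}\limits^{+}u$ (so it is unbalanced), and all of its triangles lie in the positive clique (so it is $C_3^-$-free). Writing an eigenvector as $x_u=a$, $x_v=b$, $x_{u_1}=d$, and $x_{v_j}=c$ for all $j$, the symmetry collapses $A(\dot{G}_{1,n-3})x=\lambda x$ to four scalar equations; combining two of them yields $\lambda^2 a=2a$, so the dominant eigenvector has $a=0$, and the system reduces to $\lambda b=(n-3)c$ and $\lambda c=2b+(n-4)c$. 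Eliminating $b,c$ gives
\[
\lambda^2-(n-4)\lambda-2(n-3)=0,
\]
whose larger root is exactly $\tfrac12(\sqrt{n^2-8}+n-4)$; a check that this exceeds $-\lambda_n$ shows it equals $\rho(\dot{G}_{1,n-3})$. Hence the maximum spectral radius in the class is at least the claimed value.

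For the upper bound, let $\dot{G}$ be a spectral maximizer in the class. The first task is to show that its spectral radius is the largest eigenvalue $\lambda_1(\dot{G})$ rather than $-\lambda_n(\dot{G})$. The point is that $-\lambda_n(\dot{G})$ can approach $\lambda_1$ of the underlying all-positive graph only when $\dot{G}$ is nearly antibalanced, and an antibalanced $C_3^-$-free signed graph has triangle-free underlying graph; such sparsity caps $-\lambda_n(\dot{G})$ strictly below $\tfrac12(\sqrt{n^2-8}+n-4)$, so $\rho(\dot{G})=\lambda_1(\dot{G})$.

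Next I would show $\dot{G}$ is in fact edge-extremal. Let $x$ be a unit eigenvector for $\lambda_1(\dot{G})=\rho$. If $e(\dot{G})<\binom{n}{2}-(n-2)$, I would locate a non-adjacent pair $\{p,q\}$ such that inserting the edge $pq$ with the sign $\operatorname{sgn}(x_px_q)$ keeps $\dot{G}$ both $C_3^-$-free and unbalanced; since this change adds $2|x_p||x_q|\ge 0$ to $x^{\mathsf T}A(\dot{G})x$, with strict increment unless $x_px_q=0$, it forces $\lambda_1$ up, contradicting maximality. Consequently $e(\dot{G})=\binom{n}{2}-(n-2)$, and Theorem~\ref{thm::0} gives $\dot{G}\sim\dot{G}_{s,t}$ for some $s+t=n-2$, $s,t\ge1$. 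Finally, writing the analogue of the eigen-system above for general $\dot{G}_{s,t}$ (with $x_{u_i}\equiv p$, $x_{v_j}\equiv q$), I would show $\rho(\dot{G}_{s,t})$ is strictly maximized at $\{s,t\}=\{1,n-3\}$, e.g.\ by a rotation argument proving that making the bipartition $\{s,t\}$ more unbalanced strictly increases $\rho$. This yields both the bound and the equality characterization.

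The hard part will be the reduction to edge-extremality: guaranteeing that in a dense, unbalanced, $C_3^-$-free signed graph one can always add an edge of the prescribed sign without creating a negative triangle and without destroying unbalancedness, while also ruling out the degenerate case $x_px_q=0$ for every addable pair. Controlling the sign condition is delicate because the $C_3^-$-free constraint couples the sign of a new edge to the signs along every path of length two joining its endpoints. The comparison within the family $\dot{G}_{s,t}$ and the exclusion of the $-\lambda_n$ case are, by contrast, finite and explicit computations.
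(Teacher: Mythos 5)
The paper itself does not prove Theorem \ref{thm::00}; it is quoted from \cite{WHL22}, so the only in-paper benchmark is the analogous derivation of Theorem \ref{thm::2} from the edge result Theorem \ref{thm::1}. Your skeleton follows the same philosophy (reduce the spectral problem to the edge-extremal problem, then optimize within the extremal family), and parts of it are sound: the equitable-partition computation of $\rho(\dot{G}_{1,n-3})$ is correct (the quadratic $\lambda^2-(n-4)\lambda-2(n-3)=0$ has larger root $\tfrac12(\sqrt{n^2-8}+n-4)$), and your exclusion of the case $\rho(\dot{G})=-\lambda_n(\dot{G})$, though phrased loosely in terms of ``nearly antibalanced,'' can be made rigorous exactly as the paper does for Theorem \ref{thm::2}: $-\lambda_n(\dot{G})=\lambda_1(-\dot{G})$, negating all signs turns negative triangles into positive ones, so $-\dot{G}$ has no balanced triangle, hence $\omega_b(-\dot{G})\le 2$ and Lemma \ref{WYQ} gives $-\lambda_n(\dot{G})\le n/2$, which is below $\tfrac12(\sqrt{n^2-8}+n-4)$ once $n\ge 5$. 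The concluding comparison of $\rho(\dot{G}_{s,t})$ over $s+t=n-2$ is a finite quotient-matrix computation and is unproblematic.

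The genuine gap is the step you yourself call the hard part: forcing the spectral maximizer to be edge-extremal, and it is not a removable technicality. In a $C_3^-$-free signed graph a non-edge $\{p,q\}$ can be inserted only with a sign $\epsilon$ satisfying $\epsilon\,\sigma(pw)\sigma(qw)=+1$ for \emph{every} common neighbour $w$; if two common neighbours yield different products no sign is admissible at all, and even when a sign is forced it need not equal $\operatorname{sgn}(x_px_q)$, in which case the Rayleigh increment $2\epsilon x_px_q$ is negative and the maximality argument collapses (nor can the degenerate case $x_px_q=0$ be dismissed, since signed graphs have no Perron--Frobenius positivity). You give no argument that a compatible pair exists whenever $e(\dot{G})<\binom{n}{2}-(n-2)$; that existence statement essentially encodes the structural analysis which is the whole content of Theorem \ref{thm::0}'s proof. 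Moreover, the cheap route that works in this paper for Theorem \ref{thm::2} demonstrably fails here: Stani\'c's bound (Lemma \ref{lem::5}) with frustration index $l\ge 1$ gives $e(\dot{G})\ge(\lambda_1^2+n+1)/2$, and substituting $\lambda_1\ge\tfrac12(\sqrt{n^2-8}+n-4)$ (whose square equals $(n-4)\lambda_1+2(n-3)$) yields only $e(\dot{G})\ge\binom{n}{2}-(n-2)-1$, the deficit tending to $3/2$; so one cannot invoke Theorem \ref{thm::0} directly, whereas in the $C_{2k+1}^-$ setting with $k\ge 3$ the same computation closes exactly because $\rho(C_3^-\cdot K_{n-2})>n-3$. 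Closing this one-edge gap (or proving your edge-addition claim) is precisely the missing mathematical content, so as it stands the proposal is incomplete.
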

	
	\begin{figure}
		\begin{center}
			\includegraphics[width=8cm,height=3.5cm]{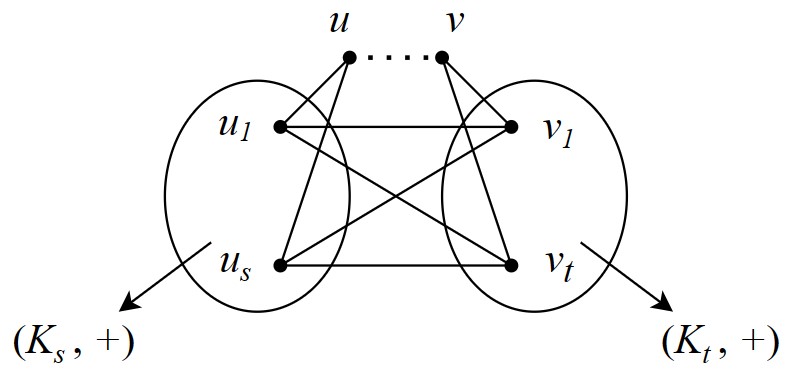}
		\end{center}
		\vskip -0.6cm\caption{The signed graphs $\dot{G}_{s,t}$ (dashed lines indicate negative edges).}
		\label{fig1}
	\end{figure}
	
	In this paper, we consider to extend the results of Theorems \ref{thm::0} and \ref{thm::00}
	to the negative odd cycle $C_{2k+1}^-$ where $k\geq 3$.  Let $C_{3}^{-} \cdot K_{n-2}$ denote the coalescence of $C_{3}^{-}$ and $(K_{n-2},+)$ as shown in Figure \ref{fig2}. Our main results are as follows.

	\begin{figure}
		\begin{center}
			\includegraphics[width=6.5cm,height=2.8cm]{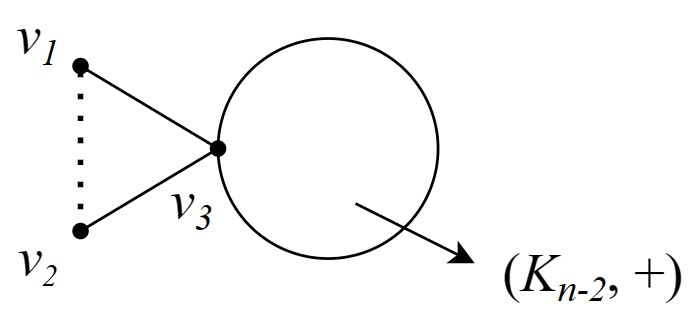}
		\end{center}
		\vskip -0.6cm\caption{The signed graphs $C_{3}^{-} \cdot K_{n-2}$ (dashed lines indicate negative edges).}
		\label{fig2}
	\end{figure}

	\begin{thm}\label{thm::1}
		Let $\dot{G}=(G,\sigma)$ be an unbalanced signed graph with order $n$, and let $3 \le k \le n/10-1$. If $\dot{G}$ is $C_{2k+1}^{-}$-free, then 
		\begin{equation}
			e(\dot{G}) \le \frac{n(n-1)}{2}-2(n-3),
			\nonumber
		\end{equation} 
		with equality holding if and only if $\dot{G} \sim C_{3}^{-} \cdot K_{n-2}$.
	\end{thm}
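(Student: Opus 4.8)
The plan is to show that avoiding $C_{2k+1}^{-}$ while remaining unbalanced forces all of the negativity of $\dot G$ into a single pendant negative triangle, and that this confinement already costs $2(n-3)$ non-edges. The engine of the whole argument is a switching-invariant lemma that I would isolate first: \emph{if $\dot K$ is an unbalanced signed complete graph on $m\ge 2k+2$ vertices, then $\dot K$ contains a subgraph switching equivalent to $C_{2k+1}^{-}$.} To prove this I would first observe that if every triangle of $\dot K$ were positive, then every cycle would be positive (triangulate an $\ell$-cycle by a chord into a triangle and an $(\ell-1)$-cycle and induct), so $\dot K$ would be balanced; hence there is a negative triangle $xyz$, i.e.\ $\sigma(xy)\sigma(yz)\sigma(zx)=-1$. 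I would then grow this triangle into a negative cycle of length exactly $2k+1$. Writing the candidate cycle as $x\,u_1\cdots u_{2k-2}\,z\,y\,x$ through $2k-2$ spare vertices, the triangle identity $\sigma(zy)\sigma(yx)=-\sigma(xz)$ reduces the requirement ``the cycle is negative'' to the existence of an $x$–$z$ path of length $2k-1$ whose sign equals $\sigma(xz)$.

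Such a path I would build starting from the single edge $xz$ (length $1$, sign $\sigma(xz)$) and performing $k-1$ insertions, each replacing an edge $ab$ of the current path by a $3$-path $a\,c\,d\,b$ on two fresh vertices; an insertion raises the length by $2$ and multiplies the sign by the sign of the $4$-cycle $a\,c\,d\,b\,a$. After $k-1$ insertions the length is $1+2(k-1)=2k-1$, using $2k-2\le m-3$ fresh vertices, and it suffices to choose the insertions so that the product of the $4$-cycle signs is $+1$. The hard part of the lemma — and, I expect, the main technical obstacle of the whole theorem — is precisely this sign-and-length bookkeeping: one must guarantee, step by step, that the abundant spare vertices supply $4$-cycles of the sign needed to fix the parity, ruling out the degenerate ``all insertions force the same sign'' configurations. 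This is exactly where the slack $m\gg 2k$ (coming from $k\le n/10-1$) is spent.

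With the Key Lemma in hand, the global argument proceeds as follows. Assuming the bound fails, the complement $\overline G$ has fewer than $2(n-3)$ edges, so its average degree is below $4$ and a greedy/Caro--Wei estimate gives an independent set of size exceeding $n/5$; equivalently $\omega(G)>n/5\ge 2(k+1)$, where the inequality $2(k+1)\le n/5$ is just a restatement of $k\le n/10-1$. Thus a maximum clique $Q$ has order at least $2k+2$, so by the Key Lemma $Q$ is balanced, and after switching I may take $Q$ to be an all-positive clique. Next I would classify each $v\notin Q$: if $v$ has at least two neighbours in $Q$ they must all carry the same sign, for otherwise a mixed pair yields a negative triangle $vab$ that extends, along a positive path inside $Q$ of length $2k-1$, to a $C_{2k+1}^{-}$; switching such ``good'' vertices makes all $Q$–$Q$, $Q$–good and good--good edges positive (a negative good--good edge would again close up through $Q$ into a $C_{2k+1}^{-}$). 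Hence every negative edge is incident to a ``bad'' vertex with at most one neighbour in $Q$, and if no bad vertex existed the graph would be all-positive, i.e.\ balanced — a contradiction. Finally, a negative edge $xy$ joining two bad vertices can avoid creating a $C_{2k+1}^{-}$ only if $x$ and $y$ share their unique neighbour $w\in Q$ (otherwise $x$ and $y$ re-enter $Q$ at distinct vertices and a length-$2k$ positive detour through $Q$ produces the forbidden cycle); this is exactly the pendant negative triangle $xyw$ of $C_{3}^{-}\cdot K_{n-2}$.

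It remains to convert this confinement into the exact count and the uniqueness statement, and here I expect the second genuine difficulty. Each bad vertex misses at least $\omega-1$ vertices of $Q$, so two bad vertices already contribute roughly $2(\omega-1)$ non-edges; to reach the precise value $2(n-3)$ and to force equality I would need a stability step that bootstraps $\omega$ up to $n-2$ and rules out any third attached vertex (whether a further bad vertex, which would only add non-edges, or an additional near-universal ``good'' vertex, which the unbalance/$C_{2k+1}^{-}$-free structure must exclude rather than the crude edge count, since the count alone is too weak). Carrying out this stability analysis — showing that the unique cheapest way to be unbalanced yet $C_{2k+1}^{-}$-free is to hang a negative triangle on a single vertex of a maximal positive clique, giving exactly $n-2$ clique vertices and precisely $2(n-3)$ non-edges — is what pins down $\dot G\sim C_{3}^{-}\cdot K_{n-2}$ in the equality case.
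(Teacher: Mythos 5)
Your skeleton does overlap with the paper's at its two load-bearing points: the edge bound forces a clique of order exceeding $n/5\ge 2(k+1)$ (the paper uses Tur\'an's theorem, Lemma \ref{lem::1'}, where you use Caro--Wei on the complement), and that clique must be balanced because an unbalanced signed complete graph of such order contains a negative $(2k+1)$-cycle. But your proof of that Key Lemma is not a proof: the ``sign-and-length bookkeeping'' of the insertion scheme \emph{is} the entire content of the lemma, and you explicitly leave open the degenerate case in which the available $4$-cycles cannot repair the parity. Everything downstream (the clique being balanced, hence switchable to all-positive, hence usable for positive detours) is conditional on it. The paper's Lemma \ref{lem::2} proves exactly this statement by a different and complete route: Zaslavsky's switching lemma (Lemma \ref{lem::1}) normalizes the edge $uv$ and all edges at $u$ to be positive, and a case analysis on the sign pattern of the star at $v$ either reroutes a $(2k-1)$-cycle of the clique through $u$ and $v$ to produce $C_{2k+1}^-$ directly, or forces the clique to be all-negative, in which case every $(2k+1)$-cycle inside it is automatically negative. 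If you insist on the insertion scheme, you would have to show that its failure forces precisely such an all-negative (or balanced) configuration, which essentially reproduces the paper's case analysis.

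Even granting the Key Lemma, two further gaps remain. First, your structural classification only treats negative edges both of whose endpoints are adjacent to $Q$: a ``bad'' vertex may have \emph{no} neighbour in $Q$, and negative edges between a bad and a good vertex, or sitting on a tail at distance $\ge 2$ from $Q$, are never addressed, so the conclusion ``every negative edge lies in a pendant triangle on $Q$'' does not follow. The paper closes this hole before ever invoking a clique: Lemma \ref{lem::3} shows the edge bound forces a negative cycle of length at most $4$, and the layering $\Gamma_0,\Gamma_1,\dots$ by distance from a negative edge, together with the non-edge count \eqref{eq::2}, gives $|\Gamma_0|+|\Gamma_1|+|\Gamma_2|\ge n-1$, so nothing in $\dot G$ is far from the negativity; Claims \ref{claim::0} and \ref{claim::1} then connect the negative edge to the clique by a path of length at most $4$, and Lemmas \ref{lem::2} and \ref{lem::2'} finish. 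Second, and you concede this yourself, the exact bound $2(n-3)$ and the uniqueness of $C_3^-\cdot K_{n-2}$ are deferred to an unspecified ``stability step''; your own estimate (two bad vertices cost only about $2(\omega-1)\approx 2n/5$ non-edges) shows the crude count cannot deliver it. That refinement --- the meticulous case-by-case non-edge counting in which every branch ends in $m\ge 2n-6$, with equality pinning down $G\cong C_3\cdot K_{n-2}$ and then $\dot G\sim C_3^-\cdot K_{n-2}$ --- is the bulk of the paper's proof. As it stands, your proposal establishes neither the inequality nor the equality characterization.
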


	\begin{thm}\label{thm::2}
		Let $\dot{G}=(G,\sigma)$ be an unbalanced signed graph with order $n$, and let $3 \le k \le (n-11)/10$.  If $\dot{G}$ is $C_{2k+1}^{-}$-free, then
		\begin{equation}
			\rho(\dot{G}) \le \rho(C_{3}^{-} \cdot K_{n-2}),
			\nonumber
		\end{equation} 
		with equality holding if and only if $\dot{G} \sim C_{3}^{-} \cdot K_{n-2}$. 
	\end{thm}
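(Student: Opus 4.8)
The plan is to bound $\lambda_1(\dot{G})$ and $-\lambda_n(\dot{G})$ separately against $\rho_0:=\rho(C_{3}^{-}\cdot K_{n-2})$, observing first that $\rho_0=\lambda_1(C_{3}^{-}\cdot K_{n-2})$ since the extremal graph is an all-positive $K_{n-2}$ carrying one small negative gadget, whence $\rho_0\ge\lambda_1(K_{n-2})=n-3$ by interlacing. Throughout I use freely that switching preserves every cycle sign, hence the $C_{2k+1}^{-}$-free property, while fixing the spectrum.

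The negative end is soft. Since $-\lambda_n(\dot{G})=\lambda_1(-\dot{G})$ and flipping all signs sends each negative $(2k+1)$-cycle to a positive one, $-\dot{G}$ is $C_{2k+1}^{+}$-free. Taking a nonnegative unit eigenvector $\mathbf{y}$ of $\lambda_1(-\dot{G})$ after a suitable switching and letting $H^{+}$ be the positive-edge subgraph in this gauge, any $(2k+1)$-cycle inside $H^{+}$ would be an all-positive, hence forbidden, $(2k+1)$-cycle of $-\dot{G}$; thus $H^{+}$ is $C_{2k+1}$-free as an ordinary graph. Discarding the nonpositive negative-edge contribution yields
\[
-\lambda_n(\dot{G})=\mathbf{y}^{\!\top}A(-\dot{G})\mathbf{y}\le\mathbf{y}^{\!\top}A(H^{+})\mathbf{y}\le\lambda_1(H^{+})\le\sqrt{\lfloor n^2/4\rfloor}<n-3\le\rho_0,
\]
the penultimate step being the spectral Tur\'{a}n bound for odd cycles, valid in the range $n\ge 10k+11$. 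It remains to prove $\lambda_1(\dot{G})\le\rho_0$.

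For the positive end I argue by near-completeness. If $\lambda_1(\dot{G})>\rho_0$ then, using $\lambda_1(\dot{G})\le\lambda_1(G)$ for the underlying graph $G$ (Perron--Frobenius, since $|A(\dot G)|=A(G)$) together with Nikiforov's inequality $\lambda_1(G)\le(1-1/\omega(G))n$, one gets $\omega(G)>n/3\ge 2k+1$, so $G$ carries a clique $Q$ with $|Q|>n/3$. The induced signed clique $\dot{G}[Q]$ must be balanced: otherwise, being an unbalanced signed complete graph of order exceeding $2k+1$, it would contain a negative $(2k+1)$-cycle, contradicting the hypothesis. Hence I may switch so that $Q$ is an all-positive clique, and now invoke the decisive path-completion observation: if a negative edge $uv$ had two \emph{distinct} positive neighbours in $Q$, then $uv$ together with a positive path of length $2k$ through $Q$ (available once $|Q|\ge 2k-1$, which $k\le(n-11)/10$ amply guarantees) would close up to a negative $(2k+1)$-cycle. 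Consequently the two endpoints of every negative edge share a single common neighbour in $Q$ and are otherwise of very small degree, which—together with the edge bound of Theorem~\ref{thm::1} and the requirement that at least one negative cycle survive—forces the configuration of $C_{3}^{-}\cdot K_{n-2}$: an all-positive clique on $n-2$ vertices with a negative triangle attached at a single vertex through two degree-two vertices $a,b$.

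Finally, with the structure reduced to ``positive $K_{m}$ plus a negative triangle gadget'' and $\mathbf{x}$ the nonnegative eigenvector for $\lambda_1(\dot{G})$, I finish by a monotonicity step on the Rayleigh quotient $\lambda_1(\dot{G})=\mathbf{x}^{\!\top}A(G)\mathbf{x}-4x_ax_b$: enlarging the positive clique strictly increases $\mathbf{x}^{\!\top}A(G)\mathbf{x}$ while the lone penalty $4x_ax_b$ only shrinks, because $a,b$ carry the smallest eigenvector entries, so $\lambda_1$ is largest exactly when $m=n-2$, i.e.\ for $C_{3}^{-}\cdot K_{n-2}$; strictness in the eigenvalue equations at $a$ and $b$ supplies the ``only if'' clause. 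I expect the real difficulty to lie in the third paragraph—promoting the local path-completion principle to the global statement that the entire bulk is a single positive clique and the negative part is exactly one triangle, uniformly across the admissible range of $k$. This is the spectral analogue of the structural core of Theorem~\ref{thm::1}, whereas the $-\lambda_n$ estimate and the final optimization are comparatively routine.
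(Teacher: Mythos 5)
Your opening split into bounding $\lambda_1(\dot G)$ and $-\lambda_n(\dot G)$ separately matches the paper's strategy, and your treatment of the negative end is essentially sound: the switch-to-nonnegative-eigenvector trick plus discarding the negative edges correctly reduces to a $C_{2k+1}$-free unsigned graph $H^{+}$. However, the spectral Tur\'an theorem for odd cycles that you cite is not available at the range $n\ge 10k+11$; Nikiforov's theorem and its known refinements require $2k+1$ to be a far smaller fraction of $n$ (on the order of $n/160$). This particular gap is easily repaired: $C_{2k+1}$-freeness of $H^{+}$ forces $\omega(H^{+})\le 2k\le (n-11)/5$, and the clique-number bound $\lambda_1(H^{+})\le (1-1/\omega(H^{+}))n$ already gives $\lambda_1(H^{+})<n-3$. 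This repaired version is in spirit what the paper does, except that it applies the signed (balanced-clique-number) form of that bound, Lemma \ref{WYQ}, directly to $-\dot G$.

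The genuine gap is the positive end. Assuming $\lambda_1(\dot G)>\rho_0:=\rho(C_3^{-}\cdot K_{n-2})$, you extract a clique $Q$ with $|Q|>n/3$, switch it all-positive, and then try to promote a local path-completion observation to the global claim that $\dot G$ is a positive clique plus a single negative triangle gadget. That promotion is exactly the structural core of Theorem \ref{thm::1} (it occupies the bulk of that proof's case analysis), and you explicitly leave it unproved; moreover your path-completion handles only positive attachments to $Q$, while negative and mixed attachments require their own cases. Worse, midway you invoke ``the edge bound of Theorem \ref{thm::1}'' without having established any lower bound on $e(\dot G)$ — a lower bound on $\lambda_1$ does not by itself bound the number of edges from below. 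The missing idea, which is the paper's actual key step, is Stani\'c's frustration-index refinement of Hong's inequality (Lemma \ref{lem::5}): for a connected unbalanced signed graph the frustration index satisfies $l\ge 1$, so $\lambda_1(\dot G)\le\sqrt{2(e(\dot G)-1)-n+1}$, and combining with $\rho(\dot G)\ge\rho_0>n-3$ yields $e(\dot G)\ge \frac{n(n-1)}{2}-2(n-3)$ by integrality; Theorem \ref{thm::1} then delivers both the inequality and the uniqueness of the extremal graph in one stroke. Note that the strictness $\rho_0>n-3$ needed here comes from the exact characteristic polynomial of Lemma \ref{lem::8}; your interlacing bound $\rho_0\ge n-3$ is too weak for this purpose. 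Your final Rayleigh-quotient monotonicity step is likewise only sketched (comparing $\lambda_1$ across different graphs while asserting that $a,b$ carry the smallest eigenvector entries is unjustified), and you never address disconnected $\dot G$, which the theorem permits: the paper disposes of that case by showing the nontrivial component would have order $n-1$ and too many edges, contradicting Theorem \ref{thm::1}.
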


	\section{Proof of Theorem \ref{thm::1}}
	
	In order to prove Theorem \ref{thm::1}, we first present some key lemmas for later use.

	\begin{lem}(Zaslavsky \cite{Z82})\label{lem::1}
		Let $G$ be a connected graph and $T$ a spanning tree of $G$. The each
		switching equivalence class of signed graphs on the graph $G$ has a unique representative which is +1 on T. Indeed, given any prescribed sign function $\sigma_{T}$: $T \to \{+1,-1\}$, each switching class has a single representative which agrees with $\sigma_{T}$ on $T$.
	\end{lem}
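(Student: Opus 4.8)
The plan is to encode switching by vertex-functions rather than by vertex subsets, which linearizes the operation and makes the tree structure directly usable. For a map $\theta\colon V(G)\to\{+1,-1\}$ define the switched signature $\sigma^{\theta}$ by $\sigma^{\theta}(uv)=\theta(u)\,\sigma(uv)\,\theta(v)$. The paper's operation $\dot{G}\mapsto\dot{G}_{U}$ is exactly $\sigma^{\theta}$ for the function $\theta$ equal to $-1$ on $U$ and $+1$ on $V(G)\setminus U$, and conversely every $\theta$ arises this way from $U=\theta^{-1}(-1)$. Hence two signed graphs on $G$ are switching equivalent precisely when their signatures differ by some $\sigma^{\theta}$. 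I would also record at the outset the key degeneracy that drives the uniqueness argument: $\theta$ and $-\theta$ yield the same switched signature, since $(-\theta(u))\sigma(uv)(-\theta(v))=\theta(u)\sigma(uv)\theta(v)$.

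For existence, I would root $T$ at an arbitrary vertex $r$ and construct $\theta$ by induction along $T$. Set $\theta(r)=+1$, and process the remaining vertices in order of increasing distance from $r$, so that when $v$ is processed its unique parent $p$ on $T$ already has a value. Because all quantities lie in $\{+1,-1\}$, the requirement $\sigma^{\theta}(pv)=\sigma_{T}(pv)$ is equivalent to $\theta(v)=\theta(p)\,\sigma(pv)\,\sigma_{T}(pv)$, which determines $\theta(v)$ from $\theta(p)$. Since $T$ is a spanning tree, this visits every vertex exactly once and yields a switching function whose switched signature agrees with $\sigma_{T}$ on every edge of $T$. Specializing to $\sigma_{T}\equiv+1$ produces the promised representative that is $+1$ on $T$.

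For uniqueness, suppose $\theta$ and $\theta'$ both give switched signatures agreeing with $\sigma_{T}$ on $T$. Then for each tree edge $pv$ we have $\theta(p)\sigma(pv)\theta(v)=\theta'(p)\sigma(pv)\theta'(v)$, and cancelling $\sigma(pv)$ gives $\theta(p)\theta(v)=\theta'(p)\theta'(v)$, i.e.\ $\phi(p)=\phi(v)$ where $\phi=\theta\theta'$. As $T$ is connected and spanning, $\phi$ is constant on $V(G)$, so $\theta'=\pm\theta$. By the degeneracy noted above, $\theta$ and $-\theta$ produce identical signatures, so the two representatives coincide as signed graphs, proving uniqueness within the switching class.

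The only point requiring care is that uniqueness is asserted for the resulting signed graph, not for the switching function: one must observe that the two-element ambiguity $\{+\mathbf{1},-\mathbf{1}\}$ in $\theta$ collapses under the map $\theta\mapsto\sigma^{\theta}$, which is where the degeneracy remark is used. Connectedness of $G$ (equivalently of $T$) is exactly what forces $\phi$ to be globally constant; without it one would only obtain a representative that is prescribed on each component separately, and the uniqueness conclusion would fail across different components.
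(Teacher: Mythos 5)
Your proof is correct and complete. Note that the paper itself offers no proof of this statement: it is quoted verbatim as a known result of Zaslavsky \cite{Z82}, so there is no internal argument to compare against. What you have written is essentially the standard (and Zaslavsky's original) argument: identifying switchings with vertex functions $\theta\colon V(G)\to\{+1,-1\}$ acting by $\sigma^{\theta}(uv)=\theta(u)\sigma(uv)\theta(v)$, constructing $\theta$ by propagation from a root along the spanning tree for existence, and for uniqueness observing that two solutions differ by a function $\phi=\theta\theta'$ that is constant on tree edges, hence globally constant by connectedness, so that $\theta'=\pm\theta$ and the two switched signatures coincide. You also correctly isolate the one point where care is needed --- uniqueness holds for the signed graph, not the switching function, because $\theta$ and $-\theta$ induce the same signature --- and correctly identify connectedness as the hypothesis that makes $\phi$ globally constant. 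No gaps.
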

	
	\begin{lem}(Tur\'{a}n \cite{T41})\label{lem::1'}
		Let $G$ be a graph with $n$ vertices, and let  $r$ be a positive integer. If $e(G) > (1-\frac{1}{r})\frac{n^2}{2}$, then $G$ contains $K_{r+1}$ as a subgraph.
	\end{lem}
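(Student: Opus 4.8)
The plan is to prove the contrapositive statement: if $G$ is a $K_{r+1}$-free graph on $n$ vertices, then $e(G)\le \left(1-\frac{1}{r}\right)\frac{n^2}{2}$, the bound attained by the balanced complete $r$-partite (Tur\'{a}n) graph. I would fix $r$ and, among all $K_{r+1}$-free graphs on the vertex set $\{1,\dots,n\}$, choose one graph $G$ with the maximum number of edges; bounding $e(G)$ then suffices, since every $K_{r+1}$-free graph has at most this many edges.

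The core structural claim is that an edge-maximal $K_{r+1}$-free graph may be assumed complete multipartite, equivalently that its non-adjacency relation is an equivalence relation. Reflexivity and symmetry are immediate (the graph is simple), so the only issue is transitivity. Suppose $u\not\sim v$ and $v\not\sim w$; I would run a Zykov-type symmetrization. If $d(u)\ge d(v)$, delete all edges at $v$ and instead join $v$ to exactly $N_G(u)$, making $v$ a ``clone'' of $u$. Because $u$ and $v$ are then non-adjacent twins, any clique uses at most one of them, so no $K_{r+1}$ is created; and the degree choice guarantees the edge count does not drop. Iterating such cloning moves along non-edges pushes $G$ toward a graph in which non-adjacency is transitive, that is, a complete multipartite graph, without decreasing $e(G)$ or destroying $K_{r+1}$-freeness.

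Once $G$ is complete multipartite with parts of sizes $n_1,\dots,n_m$, the absence of $K_{r+1}$ forces $m\le r$, since a complete $m$-partite graph already contains $K_m$ (take one vertex from each part). Writing $e(G)=\binom{n}{2}-\sum_{i=1}^{m}\binom{n_i}{2}$, maximizing the number of edges is the same as minimizing $\sum_i n_i^2$ subject to $\sum_i n_i=n$ and $m\le r$. By convexity this is minimized when $m=r$ and the parts are as balanced as possible, i.e.\ at the Tur\'{a}n graph $T(n,r)$, and a direct computation then yields $e(G)\le\left(1-\frac{1}{r}\right)\frac{n^2}{2}$, which completes the argument.

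The step I expect to be most delicate is the symmetrization: one must verify carefully that cloning across a non-edge both preserves $K_{r+1}$-freeness (the twin argument) and never decreases the edge count, and that repeated application terminates in a complete multipartite graph rather than cycling among configurations with the same edge count. If this bookkeeping proves awkward, I would instead use Tur\'{a}n's original induction on $n$: locate a copy of $K_r$ inside the extremal graph, note that each of the remaining $n-r$ vertices is adjacent to at most $r-1$ of its vertices, and combine this edge bound with the inductive estimate applied to the induced subgraph on those $n-r$ vertices.
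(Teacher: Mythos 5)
The paper itself does not prove this lemma: it is quoted as Tur\'an's classical theorem with a citation to \cite{T41} and used as a black box, so any proof you give is necessarily your own rather than a variant of the paper's. Your argument is the standard one and is essentially correct, but the two halves deserve different grades. The symmetrization route, as you literally describe it (iteratively cloning across non-edges), really does have the termination problem you flag: moves with $d(u)=d(v)$ keep the edge count constant and can cycle, so ``iterating pushes $G$ toward a complete multipartite graph'' is not yet a proof. The clean repair is to avoid iteration entirely: take $G$ edge-maximum among $K_{r+1}$-free graphs and suppose non-adjacency is not transitive, i.e.\ there are vertices with $u\not\sim v$, $v\not\sim w$, $u\sim w$. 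If $d(u)>d(v)$ or $d(w)>d(v)$, a single cloning move strictly increases $e(G)$, contradicting maximality; otherwise $d(v)\ge d(u)$ and $d(v)\ge d(w)$, and replacing \emph{both} $u$ and $w$ by clones of $v$ changes the edge count by $2d(v)-\bigl(d(u)+d(w)-1\bigr)\ge 1$ (the $-1$ coming from the edge $uw$), again a contradiction; $K_{r+1}$-freeness is preserved in both cases by your twin argument. Hence a maximum graph is already complete multipartite and your convexity computation finishes. Your fallback, Tur\'an's induction on $n$ by deleting a $K_r$ (which exists in an edge-maximal graph once $n>r$, since adding any missing edge must create a $K_{r+1}$), is complete and closes exactly:
\begin{equation}
\binom{r}{2}+(n-r)(r-1)+\Bigl(1-\frac{1}{r}\Bigr)\frac{(n-r)^2}{2}=\Bigl(1-\frac{1}{r}\Bigr)\frac{n^2}{2},
\nonumber
\end{equation}
so the inductive step loses nothing, and the base case $n\le r$ holds since $\binom{n}{2}\le\bigl(1-\frac{1}{r}\bigr)\frac{n^2}{2}$ there. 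Either variant proves the lemma (indeed the stronger Tur\'an-graph bound, which you then relax to the stated $\bigl(1-\frac{1}{r}\bigr)\frac{n^2}{2}$); just phrase the symmetrization as a contradiction with edge-maximality rather than as an unbounded iteration.
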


	For $a\geq 2$, let $H_{n,a}$ denote the graph obtained from $P_{a} \cup K_{n-a}$  by adding an edge between the two endpoints of $P_a$ (unless $a=2$) and connecting these two endpoints to all vertices of  $K_{n-a}$ (see Figure \ref{fig3}).  Suppose that $\dot{H}_{n,a}=(H_{n,a}, \sigma)$ is an unbalanced signed graph with underlying graph $H_{n,a}$.  Let $u$, $v$ be the two endpoints of $P_a$. By Lemma \ref{lem::1}, up to switching equivalence, we can further suppose that  $\sigma(e)=+1$ for all $e \in E(P_a) \cup E(\{u\}, V(K_{n-a}))$. Then one of the following three situations occurs:
	\begin{enumerate}[(i)]
		\item $E(\{v\}, V(K_{n-a}))$ contains only negative edges;
		\item $E(\{v\}, V(K_{n-a}))$ contains only positive edges;
		\item  $E(\{v\}, V(K_{n-a}))$  contains both negative edges and positive edges.
	\end{enumerate}
	For convenience, we use  $\dot{H}_{n,a}^1$, $\dot{H}_{n,a}^2$ and $\dot{H}_{n,a}^3$ to represent $\dot{H}_{n,a}$  in the situations (i), (ii) and (iii), respectively.
	
	If $a=2$, then $\dot{H}_{n,a}=\dot{H}_{n,2}$ is just an unbalanced signed complete graph of order $n$. 
	

	\begin{figure}[t]
		\begin{center}
			\includegraphics[width=5.8cm,height=2.5cm]{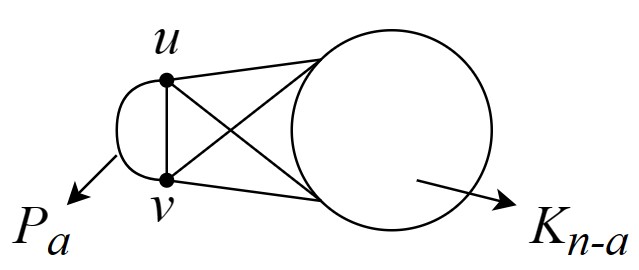}
		\end{center}
		\vskip -0.6cm\caption{The graph $H_{n,a}$.}
		\label{fig3}
	\end{figure}

	\begin{lem}\label{lem::2}
		Let $k$ and $n$ be positive integers with $k\leq (n-3)/2$. Then the unbalanced signed complete graph $\dot{H}_{n,2}$ contains a negative cycle of length $2k+1$.
	\end{lem}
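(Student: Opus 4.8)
The plan is to locate a negative triangle in $\dot{H}_{n,2}$ and then repeatedly enlarge a negative cycle by two vertices at a time until it has length $2k+1$. Recall that $\dot{H}_{n,2}$ is an unbalanced signed complete graph of order $n$. First I would record the standard fact that a signed complete graph is balanced if and only if every triangle is positive: one direction is clear since triangle signs are switching invariant, and for the converse one shows by induction on the length that every cycle is positive, splitting a cycle $w_1w_2\cdots w_\ell$ (with $\ell\ge 4$) along the chord $w_1w_3$ into the triangle $w_1w_2w_3$ and the shorter cycle $w_1w_3w_4\cdots w_\ell$, whose signs multiply to give $\sigma$ of the whole cycle because $\sigma(w_1w_3)^2=1$. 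Since $\dot{H}_{n,2}$ is unbalanced, it therefore contains a negative triangle; this is the base case (length $3$, i.e. $k=1$).

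For the inductive step I would prove the following growth claim: if $\dot{H}_{n,2}$ has a negative cycle $\dot{C}$ of length $\ell\ge 3$ and at least three vertices lie outside $\dot{C}$, then it has a negative cycle of length $\ell+2$. The mechanism is vertex insertion. If $w$ is a vertex outside $\dot{C}$ and $uv$ is an edge of $\dot{C}$, then replacing the edge $uv$ by the path $u\,w\,v$ produces a cycle of length $\ell+1$ whose sign is multiplied by $t(u,w,v):=\sigma(uw)\sigma(wv)\sigma(vu)$, the sign of the triangle on $u,w,v$ (here I use $1/\sigma(uv)=\sigma(uv)$). Performing two such insertions, of distinct outside vertices $y,z$ into two distinct edges $e_i=u_iv_i$ and $e_j=u_jv_j$ of $\dot{C}$, yields a cycle of length $\ell+2$ whose sign is multiplied by $t(u_i,y,v_i)\,t(u_j,z,v_j)$; this stays negative exactly when the two triangle signs are equal. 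Hence it suffices to find distinct edges $e_i,e_j$ and distinct outside vertices $y,z$ with $t(u_i,y,v_i)=t(u_j,z,v_j)$.

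To guarantee such a choice I would argue by contradiction. If no such choice exists, then for every pair of distinct edges $e_i,e_j$ and all distinct outside vertices $y,z$ one has $t(u_i,y,v_i)=-t(u_j,z,v_j)$. Fixing $e_i,e_j$ and letting $y$ vary shows, using that at least three vertices lie outside $\dot{C}$, that $w\mapsto t(u_i,w,v_i)$ and $w\mapsto t(u_j,w,v_j)$ are opposite constants on the outside set. Applied to the three edges $e_1,e_2,e_3$ of $\dot{C}$ (available since $\ell\ge 3$), this forces three constants in $\{+1,-1\}$ that are pairwise distinct, which is impossible; thus the growth step always succeeds. Finally, starting from the negative triangle and applying the growth step $k-1$ times reaches length $2k+1$; at the last step the cycle has length $2k-1$, so the number of outside vertices is $n-(2k-1)\ge 3$ (indeed $\ge 4$) precisely because $k\le (n-3)/2$, and the same bound controls all earlier steps. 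The main obstacle is the growth step itself, namely guaranteeing a sign-preserving pair of insertions for an arbitrary signature; the triangle-sign pigeonhole above resolves it, while the balance characterization and the vertex bookkeeping are routine.
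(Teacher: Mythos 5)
Your proof is correct, and it takes a genuinely different route from the paper's. The paper normalizes the signature via Zaslavsky's switching lemma (Lemma \ref{lem::1}) so that the edge $uv$ and all edges from $u$ into $K_{n-2}$ are positive, and then runs a case analysis on the edges from $v$ into $K_{n-2}$ (all negative, all positive, or mixed); in each case it converts a $(2k-1)$-cycle inside the clique into a negative $(2k+1)$-cycle through $u$ and $v$, with the all-positive case needing an extra subargument forcing every clique edge to be negative. You avoid switching altogether: you extract a negative triangle from unbalancedness and then grow the negative cycle two vertices at a time, and your pigeonhole on the three triangle-sign constants attached to three edges of the current cycle (three pairwise opposite values in $\{+1,-1\}$ being impossible) is exactly what makes a sign-preserving double insertion always available. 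Your argument is uniform (no case distinctions), in fact produces negative cycles of every odd length $3,5,\dots,2k+1$, and needs only $n-(2k-1)\ge 3$, i.e.\ $n\ge 2k+2$, whereas the paper's Case 2 genuinely uses $2k+1\le n-2$. What it costs is the reliance, in the base case, on the implication ``all cycles positive $\Rightarrow$ balanced'' (Harary's theorem): since the paper defines balance via switching rather than via cycle signs, you should either cite that theorem or replace it by the direct argument for complete graphs (switch so that all edges at a fixed vertex $r$ are positive; then positivity of each triangle through $r$ forces every remaining edge to be positive). The paper itself uses the analogous fact implicitly, and handles its own base case by citing Theorem \ref{thm::0}, so this is a presentational point rather than a genuine gap.
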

	\begin{proof}
		For $k=1$, by Theorem \ref{thm::0}, $\dot{H}_{n,2}$ contains a negative triangle. For $k \ge 2$, we consider the following three cases. 
		
		{\flushleft {\it Case 1.} $\dot{H}_{n,2}=\dot{H}_{n,2}^1$.}
		
		Take an arbitrary cycle of length $2k-1$ in the subgraph  $\dot{K}_{n-2}$ of $H_{n,2}^1$, and denote it by $\dot{C}_{2k-1}$.  If $\dot{C}_{2k-1}$ is a positive cycle, let  $e_1=\{u_{1}u_{2}\}$ be a positive edge in $\dot{C}_{2k-1}$. Then $\dot{C}_{2k-1}-e_1$ is a positive path, and  $\dot{P}_{2} \cup (\dot{C}_{2k-1}-e_1) \cup \{uu_1\} \cup \{vu_2\}$ is exactly a negative cycle of length $2k+1$ in $\dot{H}_{n,2}^1$.  Similarly, if $\dot{C}_{2k-1}$ is a negative cycle, we can also obtain a positive path $\dot{C}_{2k-1}-e_2$ by removing a negative edge $e_2=\{u_1u_2\}$ from $\dot{C}_{2k-1}$. Then $\dot{P}_{2} \cup (\dot{C}_{2k-1}-e_2) \cup \{uu_1\} \cup \{vu_2\}$ is a negative cycle of length $2k+1$  in $\dot{H}_{n,2}^1$.

		{\flushleft {\it Case 2.} $\dot{H}_{n,2}=\dot{H}_{n,2}^2$.}
		
		Let  $\dot{C}_{2k-1}$ be an arbitrary cycle of length $2k-1$ in the subgraph $\dot{K}_{n-2}$ of $\dot{H}_{n,2}^2$.  If $\dot{C}_{2k-1}$ is a positive cycle that contains a negative edge $e=\{ u_1u_2 \}$, then $\dot{C}_{2k-1}-e$ is a negative path, and  $\dot{P}_{2} \cup (\dot{C}_{2k-1}-e) \cup \{uu_1\} \cup \{vu_2\}$ is a negative cycle of length $2k+1$ in $\dot{H}_{n,2}^2$. If $\dot{C}_{2k-1}$ is a negative cycle that contains a positive edge, as above, we can  find a negative cycle of length $2k+1$ in $\dot{H}_{n,2}^2$. Thus we may assume that, in the subgraph $\dot{K}_{n-2}$ of $\dot{H}_{n,2}^2$, every positive cycle of length $2k-1$  contains only positive edges, and every negative cycle of  length $2k-1$ contains only negative edges. We claim that all edges in $\dot{K}_{n-2}$ are negative. In fact, since $\dot{H}_{n,2}^2$ is unbalanced, there is a negative edge $e_1=\{xy \}$ in $\dot{K}_{n-2}$. If $k=2$, then every triangle in $\dot{K}_{n-2}$ contains only positive edges or only negative edges. This implies that all edges in $E(\{x,y\}, V(K_{n-2})\setminus\{x,y\})$ are negative. For any $e_2=\{z_1z_2\}$ in $\dot{K}_{n-2}-\{x,y\}$, we assert that $e_2$ is a negative edge, since otherwise the vertices $x$, $z_1$ and $z_2$ form a positive triangle with negative edges, contrary to  our assumption. If $k \ge 3$, then for every edge $e_3$ ($e_3\neq e_1$) in $\dot{K}_{n-2}$, there is a cycle of length $2k-1$ in $\dot{K}_{n-2}$ containing both $e_1$ and $e_3$. By assumption, we immediately deduce that $e_3$ is a negative edge. Therefore, we conclude that all edges in the subgraph $\dot{K}_{n-2}$ are negative. 
		Then every cycle of length $2k+1$ (noticing that $2k+1\leq n-2$) in $\dot{K}_{n-2}$ is negative, and hence $\dot{H}_{n,2}^2$ contains a negative cycle of length $2k+2$.
		
		{\flushleft {\it Case 3.} $\dot{H}_{n,2}=\dot{H}_{n,2}^3$.}
		
		According to our assumption, $E(\{v\},V(K_{n-2}))$ contains both positive edges and negative edges. Then there exist two adjacent vertices in the subgraph $\dot{K}_{n-2}$ of $\dot{H}_{n,2}^3$ such that  $\{ vx_1 \}$ is a positive edge and $\{ vx_2 \}$ is a negative edge. Take a cycle $\dot{C}$ of length $2k-1$ in $\dot{K}_{n-2}$ that contains the edge $\{ x_{1}x_{2} \}$. Let $\dot{C}' = \dot{P}_{2} \cup (\dot{C}-\{ x_{1}x_{2} \}) \cup \{ ux_2 \} \cup \{ vx_1 \}$ and $\dot{C}'' = \dot{P}_{2} \cup (\dot{C}-\{ x_{1}x_{2} \}) \cup \{ ux_1 \} \cup \{ vx_2 \} $. Then either $\dot{C}'$ or $\dot{C}''$ is a negative cycle of length $2k+1$, since otherwise $\dot{P}_{2} \cup \{ ux_2 \} \cup \{ vx_1 \}$ and $\dot{P}_{2} \cup \{ ux_1 \} \cup \{ vx_2 \}$ would have the same sign, which is impossible. Therefore, $\dot{H}_{n,2}^3$ contains a negative cycle of length $2k+1$.
		
		This completes the proof.
	\end{proof}

	\begin{lem}\label{lem::2'}
		Let  $k$, $n$ and $a$ be integers with $a\geq 3$ and $(a-1)/2 \le k \le (n-a-1)/2$. Then  $\dot{H}_{n,a}$ contains a negative cycle of length $2k+1$. 
	\end{lem}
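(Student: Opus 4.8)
The plan is to reduce matters to the already-settled $a=2$ case (Lemma \ref{lem::2}) by isolating the signed complete subgraph carried by the clique together with the two path-endpoints. Let $\dot{K}^{\ast}$ be the signed graph of $\dot{H}_{n,a}$ induced by $V(K_{n-a})\cup\{u,v\}$; since $u$ and $v$ are each adjacent to every clique vertex and (for $a\ge 3$) to each other, $\dot{K}^{\ast}$ is a signed complete graph of order $n-a+2$. The decisive structural observation is that every internal vertex of $P_a$ has degree $2$ in $H_{n,a}$, so every cycle of $\dot{H}_{n,a}$ either lies entirely inside $\dot{K}^{\ast}$ or contains the whole positive path $P_a$ from $u$ to $v$. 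I would then split on whether $\dot{K}^{\ast}$ is balanced.

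First, if $\dot{K}^{\ast}$ is unbalanced, then it is an unbalanced signed complete graph of order $n-a+2$, and I would apply Lemma \ref{lem::2} with $n-a+2$ in place of $n$. The hypothesis $k\le (n-a-1)/2=((n-a+2)-3)/2$ is precisely what that lemma demands, and $k\ge (a-1)/2\ge 1$ because $a\ge 3$; since the sign of a cycle is a switching invariant, this produces a negative $(2k+1)$-cycle inside $\dot{K}^{\ast}$, hence inside $\dot{H}_{n,a}$.

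Second, suppose $\dot{K}^{\ast}$ is balanced. Because $\dot{H}_{n,a}$ is unbalanced while every cycle inside $\dot{K}^{\ast}$ is positive, the negative cycle must pass through $P_a$; comparing such a cycle $P_a\cup Q$ (with $Q$ a $v$--$u$ path in $\dot{K}^{\ast}$) to the cycle $\{uv\}\cup Q$ lying inside $\dot{K}^{\ast}$, and using $\sigma(P_a)=+1$, forces $\sigma(uv)=-1$. Feeding this into the fundamental-cycle (triangle) conditions of $\dot{K}^{\ast}$ relative to the spanning star centred at $u$---namely the triangles $uvx$ and $uxy$ for clique vertices $x,y$---then forces every edge from $v$ to the clique to be negative and every clique edge to be positive. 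With this explicit signature in hand I would take $C=P_a\cup\{vx_2\}\cup Q\cup\{x_1u\}$, where $Q$ is a path of length $2k-a$ inside the all-positive clique: its sign is $(+1)(-1)(+1)(+1)=-1$ and its length is $(a-1)+1+(2k-a)+1=2k+1$, as required.

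The main obstacle is the parity corner of this balanced case: the displayed construction needs $2k-a\ge 0$, whereas the hypothesis only grants $2k\ge a-1$. The sole bad value is $2k=a-1$ (which forces $a$ odd and $k=(a-1)/2$); there I would instead use the length-$a$ cycle $P_a\cup\{uv\}$, whose sign is $\sigma(P_a)\sigma(uv)=(+1)(-1)=-1$, giving a negative cycle of length $a=2k+1$. The remaining checks are routine: that the clique is large enough to house $Q$, which follows from $2k+1\le n-a$ (i.e.\ $k\le(n-a-1)/2$), and that the selected vertices are pairwise distinct.
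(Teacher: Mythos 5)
Your proof is correct and follows essentially the same route as the paper's: both split on whether the signed complete subgraph induced on $V(K_{n-a})\cup\{u,v\}$ (of order $n-a+2$) is balanced, invoke Lemma \ref{lem::2} in the unbalanced case using exactly the bound $k\le(n-a-1)/2$, and in the balanced case close up $P_a$ with a path through the clique. The only difference is cosmetic: the paper re-switches so that the clique is all-positive (forcing $\sigma(P_a)=-1$, so that a $u$--$v$ path of length $2k-a+2$ in the clique finishes uniformly, the single edge $uv$ automatically covering your parity corner $2k=a-1$), whereas you keep the original normalization and derive the explicit signature, at the cost of the extra special case.
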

	\begin{proof}
		Observe that the underlying graph $H_{n,a}$ contains $K_{n-a+2}$ as a subgraph. If $\dot{K}_{n-a+2}$ is unbalanced, by Lemma \ref{lem::2},  $\dot{K}_{n-a+2}$ contains a negative cycle of length $2k+1$ because $k \le (n-a-1)/2$. Thus $\dot{H}_{n,a}$ contains a negative cycle of length $2k+1$. Now suppose that $\dot{K}_{n-a+2}$ is balanced. Up to switching equivalence, we may assume that $\sigma(e)=+1$ for all $e \in E(K_{n-a+2})$. Then $\dot{P}_a$ is a negative path because $\dot{H}_{n,a}$ is unbalanced. Let $\dot{P}$ be a path of length $2k-a+2$ in $\dot{K}_{n-a+2}$ with endpoints $u$ and $v$. Then  $\dot{P}_a \cup \dot{P}$ forms a negative cycle of length $2k+1$. Therefore, $\dot{H}_{n,a}$ contains a negative cycle of length $2k+1$.
	\end{proof}

	\begin{lem}\label{lem::3}
		Let $\dot{G}=(G,\sigma)$ be an unbalanced signed graph with order $n$. If $e(\dot{G}) \ge \frac{n(n-1)}{2}-2(n-3)$, then $\dot{G}$ contains a negative cycle of length at most $4$.
	\end{lem}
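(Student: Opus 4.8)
The plan is to argue by contradiction, so suppose $\dot{G}$ has no negative cycle of length at most $4$. Since $\dot{G}$ is unbalanced it possesses some negative cycle, and I would fix a negative cycle $\dot{C}=v_1v_2\cdots v_\ell v_1$ of minimum length; by hypothesis $\ell\ge 5$. The first step is to note that $\dot{C}$ must be chordless: a chord $v_iv_j$ would split $\dot{C}$ into two cycles $\dot{C}_1,\dot{C}_2$, each of length between $3$ and $\ell-1$, with $\sigma(\dot{C}_1)\sigma(\dot{C}_2)=\sigma(\dot{C})\,\sigma(v_iv_j)^2=-1$; thus one of them is a negative cycle strictly shorter than $\dot{C}$, contradicting minimality. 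Consequently the $\ell$ vertices of $\dot{C}$ span exactly a cycle, and they already force $\binom{\ell}{2}-\ell$ non-edges in the complement $\overline{G}$ of the underlying graph.

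The core of the argument, and the step I expect to be the main obstacle, is to show that every vertex $w\in V(\dot{G})\setminus V(\dot{C})$ is non-adjacent to at least two consecutive vertices of $\dot{C}$. I would prove this contrapositively: assume instead that the neighbours of $w$ on $\dot{C}$ meet every edge of $\dot{C}$ (no two consecutive vertices are both missed), list them cyclically as $v_{a_1},\dots,v_{a_t}$, and observe that each gap $a_{j+1}-a_j$ is then $1$ or $2$. If the gap is $1$, the triangle $wv_{a_j}v_{a_{j+1}}$ is positive (there is no negative $C_3$), giving $\sigma(wv_{a_j})\sigma(wv_{a_{j+1}})=\sigma(v_{a_j}v_{a_{j+1}})$; if the gap is $2$, the $4$-cycle $wv_{a_j}v_{a_j+1}v_{a_{j+1}}$ is positive (there is no negative $C_4$), giving $\sigma(wv_{a_j})\sigma(wv_{a_{j+1}})$ equal to the product of the two intervening cycle edges. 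In both cases the right-hand side equals the product $Q_j$ of the signs of the cycle edges on the arc from $v_{a_j}$ to $v_{a_{j+1}}$. Taking the product of these identities over all gaps, the left-hand side telescopes to $\prod_{j}\sigma(wv_{a_j})^2=1$, whereas the arcs partition the edge set of $\dot{C}$ and hence $\prod_j Q_j=\sigma(\dot{C})=-1$; this contradiction proves the claim. The delicate point is precisely combining the triangle-free and $C_4$-free hypotheses so that the arcs tile $E(\dot{C})$ exactly once.

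Finally I would count non-edges. Each of the $n-\ell$ vertices outside $\dot{C}$ contributes at least two edges of $\overline{G}$ to $V(\dot{C})$, so together with the $\binom{\ell}{2}-\ell$ internal non-edges,
\[
e(\overline{G})\ \ge\ \binom{\ell}{2}-\ell+2(n-\ell)\ =\ 2n-6+\frac{(\ell-3)(\ell-4)}{2}.
\]
Since $\ell\ge 5$, the last term is positive and $e(\overline{G})>2n-6$. But the hypothesis $e(\dot{G})\ge \binom{n}{2}-2(n-3)$ is equivalent to $e(\overline{G})\le 2n-6$, a contradiction. Hence the shortest negative cycle has length at most $4$, as required.
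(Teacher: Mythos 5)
Your proof is correct, and its key step runs on a genuinely different mechanism than the paper's. The paper also fixes a shortest negative cycle $C^-$ (of length $c\ge 5$), observes it is chordless, and derives a contradiction by counting missing edges, but it then splits into two cases with unrelated arguments. For $c=5$ it shows by an ad hoc sign analysis that every outside vertex has at most $3$ neighbours on $C^-$ (if $x$ were adjacent to four of the five cycle vertices, one could always assemble a negative cycle of length less than $5$), giving at least $2(n-5)+5=2n-5$ missing edges. For $c\ge 6$ it ignores the signs of edges at outside vertices entirely and instead uses pairs $x,x'$ at distance $\lfloor c/2\rfloor$ on $C^-$: a common outside neighbour $y$ would split $C^-$ through the path $x\to y\to x'$ into two cycles shorter than $c$, one of which must be negative, so such pairs have disjoint outside neighbourhoods; summing over the pairs yields roughly $(c-1)(n-c)/2$ missing edges. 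That argument requires both resulting cycles to be shorter than $c$, which forces $c\ge 6$ and is precisely why the paper must treat $c=5$ separately. Your telescoping sign-product over the cyclic gaps (each gap of length $1$ or $2$, each closing with $w$ into a $C_3$ or $C_4$ that must be positive, so that the product of all gap identities forces $\sigma(\dot C)=+1$, a contradiction) replaces both cases by one uniform claim, valid for every $\ell\ge 5$: each outside vertex is non-adjacent to two consecutive cycle vertices, hence misses at least two edges to $V(\dot C)$, and then $e(\overline G)\ge 2(n-\ell)+\binom{\ell}{2}-\ell\ge 2n-5>2n-6$ finishes exactly as in the paper. What your route buys is uniformity and a cleaner exploitation of the no-short-negative-cycle hypothesis; what the paper's Case 2 buys is a far larger count of missing edges (of order $cn$ rather than $2n$), a surplus that is never actually needed for this lemma.
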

	\begin{proof}
		Let $C^{-}$ be a negative cycle of minimum length in $\dot{G}$, and let $c$ denote the length of $C^{-}$. By contradiction, suppose that $c\geq 5$. We consider the following two situations. 
		
		{\flushleft {\it Case 1.} $c=5$.}	
		
		Let $V(C^{-})=\{ v_1, v_2, v_3, v_4, v_5\}$. Up to switching equivalence, we let $\sigma(\{v_4v_5\})=-1$ and $\sigma(\{v_1v_2\})=\sigma(\{v_2v_3\})=\sigma(\{v_3v_4\})=\sigma(\{v_5v_1\})=1$. First we claim that $|N_{C^{-}}(x)| \le 3$ for any $x \in V(G)\setminus V(C^{-})$.  In fact, if $x$ is adjacent to the vertices $v_1$, $v_3$, $v_4$ and $v_5$, then $\{xv_4 \}$ and $\{xv_5 \}$ must have different signs, since otherwise  $\{xv_4 \} \cup \{xv_5 \} \cup \{v_4v_5 \}$ would form a negative triangle, contrary to our assumption. Without lose of generality, let $\sigma(xv_4)=-1$ and $\sigma(xv_5)=1$. Consider the cycles $v_1 \rightarrow v_5 \rightarrow x \rightarrow v_1$ and  $v_3 \rightarrow v_4 \rightarrow x \rightarrow v_3$. Since both of them are of length less than $5$, by our assumption, we can deduce that $\sigma(v_1x)=1$ and $\sigma(v_3x)=-1$. Then the cycle $v_1 \rightarrow v_2 \rightarrow v_3 \rightarrow x \rightarrow v_1$ is a negative cycle of length less than $5$, a contradiction. For the remaining cases, by using a similar method, we can also obtain a negative cycle of length less than $5$,  which is impossible.  Furthermore, we assert that $C^{-}$ has no chords, since otherwise the chord would divide $C^-$ into two cycles of length less than $5$, and one of them must be negative. Therefore, 
		$$
		e(\dot{G})\leq \frac{n(n-1)}{2}-2(n-5)-5=\frac{n(n-1)}{2}-(2n-5)<\frac{n(n-1)}{2}-2(n-3),
		$$
		contrary to the assumption. 
		
		{\flushleft {\it Case 2.} $c \ge 6$.}
		
		Let $x$ and $x'$ be a pair of vertices at distance $\lfloor c/2 \rfloor$ in the cycle $C^-$.  First we claim that  $N_{G-C^{-}}(x) \cap N_{G-C^{-}}(x') = \emptyset$. In fact, if there exists some $y \in N_{G-C^{-}}(x)$ $\cap$ $N_{G-C^{-}}(x')$, then the path $x \rightarrow y \rightarrow x'$ divides the cycle $C^{-}$ into two cycles of length less than  $c$, and one of them must be negative, contrary to our assumption. Thus we have $e(\{x,x'\},V(G-C^-))\leq n-c$. Furthermore, we assert that $C^-$ has no chords, since otherwise the chord would divide $C^-$ into two cycles of length less than $c$, and one of them must be negative. Let $m_1$ and $m_2$ denote the number of edges missing in $E(V(C^-),V(G)\setminus V(C^-))$ and $E(G[V(C^-)])$, respectively. By above arguments, we have 
		\begin{equation*}
			m_1 \ge \Big\lfloor \frac{c}{2} \Big\rfloor \cdot (2(n-c)-(n-c))=\Big\lfloor \frac{c}{2} \Big\rfloor \cdot (n-c)\geq \frac{(c-1)(n-c)}{2}
		\end{equation*}
		and
		\begin{equation*}
			m_2 \ge \frac{c(c-1)}{2}-c.
		\end{equation*}
		Therefore, 
		\begin{align*}
			e(\dot{G})&\leq \frac{n(n-1)}{2}-(m_1+m_2)\\
			&\leq\frac{n(n-1)}{2}-\frac{(n-2)c-n}{2}\\
			&\leq\frac{n(n-1)}{2}-\frac{5n-12}{2}\\
			&<\frac{n(n-1)}{2}-2(n-3),
		\end{align*}
		which is a contradiction. 
		
		This completes the proof.
	\end{proof}
	
	Now we are in a position to give the proof of Theorem \ref{thm::1}.

	\renewcommand\proofname{\it Proof of Theorem \ref{thm::1}.}
	\begin{proof}
		Note that $C_{3}^{-} \cdot K_{n-2}$ contains no negative cycle of length $2k+1$ whenever $k\geq 3$ and $n\geq 10k+15$. Suppose that $\dot{G}=(G,\sigma)$  is an unbalanced $C_{2k+1}^-$-free signed graph of order $n$ with the maximum number of edges and $\dot{G}\nsim C_{3}^{-} \cdot K_{n-2}$. Clearly, $\dot{G}$ is connected, and
		\begin{equation}\label{eq::2}
			e(\dot{G}) \ge e(C_{3}^{-} \cdot K_{n-2}) = \frac{n(n-1)}{2}-2(n-3).
		\end{equation}
		Since $\dot{G}$ is unbalanced, there exists  some negative cycle in $\dot{G}$, and we take $C^{-}$ as one negative cycle of the minimum length.  By Lemma \ref{lem::3}, $C^{-}$ is of length at most $4$.

		Let $e=\{ uv \}$ be a negative edge in $C^-$. For any $x\in V(G)$, we define the distance between $x$ and  $e$, denoted by $d(x,e)$, as the minimum value of the distances between $x$ and one of the endpoints of $e$. Let $D:=\max\{d(x,e):x\in V(G)\}$, and let  $\Gamma_{i}$ ($0\leq i\leq D$) denote the set of vertices in $G$ at distance $i$ from $e$. In particular, $\Gamma_{0}=\{ u,v \}$. Note that there are no edges between $\Gamma_{i}$ and $\Gamma_{j}$ whenever $|i-j|>1$. Let $m$ denote the number of edges missing in $\dot{G}$. By \eqref{eq::2}, we have
		$$
		2(n-3)\geq m\geq |\Gamma_0|(n-|\Gamma_0|-|\Gamma_{1}|)+|\Gamma_{1}|(n-|\Gamma_0|-|\Gamma_{1}|-|\Gamma_{2}|),
		$$
		which implies that
		\begin{equation}\label{eq::1}
			|\Gamma_0|+|\Gamma_{1}|+|\Gamma_{2}| \ge n-1.
		\end{equation}
		Thus $D\leq 3$, and $|\Gamma_3|\leq 1$. We consider the following two cases. 
		
		{\flushleft {\it Case 1.} $|\Gamma_{1}|=1$.}
		
		In this case, $\sum_{i\geq 2}|\Gamma_{i}| =n-|\Gamma_{0}|-|\Gamma_{1}|= n-2-|\Gamma_{1}|=n-3$. Since there are no edges between $\{u,v\}$ and $\cup_{i\geq 2}\Gamma_{i}$, we have
		$$
		e(\dot{G}) \le \frac{n(n-1)}{2}-2(n-3).
		$$
		Combining this with \eqref{eq::2} yields that $G \cong C_{3} \cdot K_{n-2}$. By Lemma \ref{lem::2}, we assert that the subgraph $\dot{K}_{n-2}$ of $\dot{G}$ is switching equivalent to a balanced complete graph, since otherwise $\dot{K}_{n-2}$ would contain a $C_{2k+1}^{-}$. Note that the switching operation does not change the sign of cycles. Thus we conclude that  $\dot{G}\sim C_{3}^{-} \cdot K_{n-2}$, contrary to the assumption. 
		
		{\flushleft {\it Case 2.} $|\Gamma_{1}| \ge 2$.}
		
		We divide $\Gamma_{1}$ into three parts, namely $V_{1}$, $V_{2}$, and $V_{3}$, where $V_{1}=N(u)\setminus((N(u) \cap N(v))\cup\{v\})$, $V_{2}=N(u) \cap N(v)$, and $V_{3}=N(v)\setminus((N(u) \cap N(v))\cup\{u\})$. Let $\omega$ denote the clique number of $\dot{G}$. Note that 
		\begin{equation}
			\begin{aligned}
				e(\dot{G}) &\ge \frac{n(n-1)}{2}-2(n-3)=\frac{n^2-5n+12}{2}>\Big(1-\frac{1}{n/5}\Big)\frac{n^2}{2}.
				\nonumber
			\end{aligned}
		\end{equation}
		Then Lemma \ref{lem::1'} implies that  $\omega \geq n/5+1$. We have the following claims.

		\begin{claim}\label{claim::0} 
			There is a maximum clique  $K_{\omega}$ of  $\dot{G}$ such that $V(K_{\omega}) \cap \Gamma_{1} \neq \emptyset$.
		\end{claim}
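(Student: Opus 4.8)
The plan is to argue by contradiction: assume that no maximum clique of $\dot G$ meets $\Gamma_1$, and then construct a clique of size at least $\omega$ that does meet $\Gamma_1$, contradicting the maximality of $\omega$. The starting point is the observation that, since there are no edges between $\Gamma_i$ and $\Gamma_j$ whenever $|i-j|>1$, every clique lies within the union of two consecutive levels $\Gamma_i\cup\Gamma_{i+1}$. Because $|\Gamma_0|=2<\omega$ and $|\Gamma_3|\le 1<\omega$, a maximum clique $K_\omega$ avoiding $\Gamma_1$ must lie entirely in $\Gamma_2\cup\Gamma_3$. I set $K'=V(K_\omega)\cap\Gamma_2$ and $t=|V(K_\omega)\cap\Gamma_3|\in\{0,1\}$, so that $|K'|=\omega-t$ and $t\le|\Gamma_3|$.

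Next I would bound the number $m_{12}$ of missing edges between $\Gamma_1$ and $\Gamma_2$. The forced non-edges split into three disjoint families: all $2(|\Gamma_2|+|\Gamma_3|)$ pairs between $\Gamma_0=\{u,v\}$ and $\Gamma_2\cup\Gamma_3$, all $|\Gamma_1|\,|\Gamma_3|$ pairs between $\Gamma_1$ and $\Gamma_3$, and the $m_{12}$ missing pairs between $\Gamma_1$ and $\Gamma_2$. Hence $2(n-3)\ge m\ge 2(|\Gamma_2|+|\Gamma_3|)+|\Gamma_1|\,|\Gamma_3|+m_{12}$, and substituting $|\Gamma_2|+|\Gamma_3|=n-2-|\Gamma_1|$ gives the key inequality $m_{12}\le 2|\Gamma_1|-2-|\Gamma_1|\,|\Gamma_3|$.

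Finally I would run a pigeonhole over $\Gamma_1$. Note that $\{z\}\cup(K'\cap N(z))$ is a clique of size $1+|K'|-|K'\setminus N(z)|=\omega+1-t-|K'\setminus N(z)|$, so it has size at least $\omega$ as soon as some $z\in\Gamma_1$ satisfies $|K'\setminus N(z)|\le 1-t$. Suppose no such $z$ exists; then every $z\in\Gamma_1$ misses at least $2-t$ vertices of $K'$, and summing over $\Gamma_1$ yields $(2-t)|\Gamma_1|\le m_{12}\le 2|\Gamma_1|-2-|\Gamma_1|\,|\Gamma_3|$. Checking the three admissible pairs $(t,|\Gamma_3|)\in\{(0,0),(0,1),(1,1)\}$ each gives a contradiction; for instance $(1,1)$ forces $|\Gamma_1|\le|\Gamma_1|-2$. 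Thus the required $z$ exists, and $\{z\}\cup(K'\cap N(z))$ is a maximum clique meeting $\Gamma_1$, contrary to our assumption.

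I expect the main difficulty to be bookkeeping rather than conceptual: the bound on $m_{12}$ must retain the term $-|\Gamma_1|\,|\Gamma_3|$, since in the boundary case where $K_\omega$ uses the single vertex of $\Gamma_3$ there is room to lose only a bounded number of edges, so the argument survives only if all the slack in $m\le 2(n-3)$ is accounted for. Isolating exactly the three disjoint families of forced non-edges, without double counting and without discarding this slack, is the crux.
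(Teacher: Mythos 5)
Your proposal is correct and follows essentially the same route as the paper: both argue by contradiction, locate the hypothetical $\Gamma_1$-avoiding maximum clique inside $\Gamma_2\cup\Gamma_3$, observe that each vertex of $\Gamma_1$ must then miss at least two vertices of the clique (else one could extend it to a maximum clique meeting $\Gamma_1$), and refute this by counting forced non-edges against the budget $m\le 2(n-3)$. The only difference is bookkeeping — you isolate the quantity $m_{12}$ and split into the cases $(t,|\Gamma_3|)$, while the paper counts the total number of missing edges directly (including the non-edges $\overline{E}(v,V_1)$ and $\overline{E}(u,V_3)$) in two cases according to whether $\Gamma_3$ is empty — and both versions are sound.
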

		
		\renewcommand\proofname{\it Proof of Claim \ref{claim::0}.}
		\begin{proof}  
			Suppose to the contrary that for every maximum clique  $K_{\omega}$ of  $\dot{G}$, we have $V(K_{\omega}) \cap \Gamma_{1} = \emptyset$. In this situation, we also can deduce that $V(K_{\omega}) \cap \Gamma_{0} = \emptyset$ because $\omega\geq n/5+1>2$.  For any $x \in \Gamma_{1}$, we assert that $|E(\{ x \}, V(K_{\omega}))| \le \omega-1$, since otherwise $G[\{x \} \cup V(K_{\omega})]$ is a clique of order $\omega+1$ in $\dot{G}$, contrary to the maximality of $\omega$. If $|E(\{ x \}, V(K_{\omega}))|= \omega-1$ for some $x\in \Gamma_1$, then there is a clique of order $\omega$ containing  $x$, contrary to our assumption. Thus we may assume that  $|E(\{ x \}, V(K_{\omega}))| \le \omega-2$ for all $x\in \Gamma_1$. Then the number of edges missing in $\dot{G}$ satisfies the following inequality:
			\begin{equation}
				m \ge |\overline{E}(v, V_1)|+|\overline{E}(u, V_3)|+|\overline{E}(\Gamma_0, \Gamma_2)|+|\overline{E}(\Gamma_1, V(K_{\omega}) \cap \Gamma_2)|+|\overline{E}(\Gamma_0 \cup \Gamma_1, \Gamma_3)|.
				\nonumber
			\end{equation}
			If $\Gamma_3 = \emptyset$, then $V(K_\omega)\subseteq\Gamma_2$, and by  the above inequality, 
			\begin{equation*}
				m  \ge |V_1|+|V_3|+2|\Gamma_{2}|+2|\Gamma_{1}|=2n-4+|V_1|+|V_3|>2n-6.
			\end{equation*}
			If $\Gamma_3 \neq \emptyset$, then $|\Gamma_3|=1$, and by the above inequality,  
			\begin{equation*}
				m \ge |V_1|+|V_3|+2|\Gamma_{2}|+|\Gamma_{1}|+|\Gamma_0 \cup \Gamma_1|
				=2n-4+|V_1|+|V_3|
				>2n-6.
			\end{equation*}
			Therefore, we  always obtain a contradiction because  $\dot{G}$ only miss at most $2n-6$ edges by \eqref{eq::2}. 
		\end{proof}
		
		\begin{claim}\label{claim::1}
			There is a path $P$ of length at most $4$ containing $\{uv \}$ with both endpoints in $K_{\omega}$, where $K_\omega$ is the maximum clique given in Claim \ref{claim::0}.
		\end{claim}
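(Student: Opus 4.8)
The plan is to build the required path by attaching $u$ and $v$ to the clique $K_\omega$ via short subpaths whose lengths sum to at most three, so that together with the negative edge $e=\{uv\}$ they form a path of length at most four. Write $A=N(u)\cap V(K_\omega)$, $B=N(v)\cap V(K_\omega)$, and $W=V(K_\omega)\cap\Gamma_1$; by Claim \ref{claim::0} we have $W\neq\emptyset$, and since $W\subseteq\Gamma_1$ every vertex of $W$ lies in $N(u)\cup N(v)$, so $A\cup B\neq\emptyset$. The generic construction is a path of length $3$: if there are distinct vertices $a\in A$ and $b\in B$, then $a\rightarrow u\rightarrow v\rightarrow b$ has both endpoints in $K_\omega$, contains $\{uv\}$, and has length $3$. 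I would first dispose of the cases $u\in V(K_\omega)$ or $v\in V(K_\omega)$, where a neighbour of the other endpoint in $K_\omega$ gives a path of length at most $2$; so I may assume $u,v\notin V(K_\omega)$, whence $A,B$ are disjoint from their own vertices and $a\neq u,v$, $b\neq u,v$ automatically.

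The length-$3$ path fails only in the degenerate configurations (A) $A=B=\{a\}$ for a single vertex $a\in V_2$, and (B) one of $A,B$ is empty, say $B=\emptyset$ (so $W\subseteq A\subseteq N(u)$). For these I would use a length-$4$ fallback, exploiting that $K_\omega$ is large ($\omega\ge n/5+1$ by Lemma \ref{lem::1'}) and that, since $u,v\notin V(K_\omega)$ and $W$ is small in the degenerate cases, there are at least $\omega-|W|\ge n/5-1$ clique vertices in $\Gamma_2$. The key structural remark is that every vertex $d\in V(K_\omega)\cap\Gamma_2$ has, by definition of $\Gamma_2$, a neighbour $z\in\Gamma_1$, and that $z$ is adjacent to $u$ or to $v$. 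In case (B) I take $a\in W\subseteq A$; if some such $z$ satisfies $z\sim v$ then $a\rightarrow u\rightarrow v\rightarrow z\rightarrow d$ is the desired length-$4$ path, and in case (A) the extra vertex of $\Gamma_1$ guaranteed by $|\Gamma_1|\ge 2$, together with $\deg_G(u),\deg_G(v)\ge 2$ (as $u,v$ lie on the negative cycle $C^-$, which has length at most $4$ by Lemma \ref{lem::3}), plays the same role on whichever side is still missing a second clique vertex.

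The main obstacle is ruling out the possibility that every candidate auxiliary vertex $z$ fails, i.e.\ that no path of length at most $4$ exists at all. I expect to settle this by a missing-edge count against the budget $m\le 2n-6$ coming from \eqref{eq::2}. If all constructions fail, then in case (A) every $z\in\Gamma_1\setminus\{a\}$ must be non-adjacent to $V(K_\omega)\setminus\{a\}$, and $u,v$ are non-adjacent to $V(K_\omega)\setminus\{a\}$ as well; hence \emph{all} edges between the set $S=\{u,v\}\cup(\Gamma_1\setminus\{a\})$ and $V(K_\omega)\setminus\{a\}$ are absent, giving $m\ge(|\Gamma_1|+1)(\omega-1)$, with the analogous bipartite deficiency appearing in case (B).

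Combining this with $\omega-1\ge n/5$ already yields a contradiction once $|\Gamma_1|$ is not too small, so the delicate point is the regime of small $|\Gamma_1|$. There, however, $|\Gamma_2|\ge n-3-|\Gamma_1|$ is large and $\Gamma_2$ misses at most $2n-6$ internal edges, so $\Gamma_2$ is dense and forces many clique vertices to funnel through the few vertices of $\Gamma_1$; I would therefore refine the count by also charging the edges absent between $V(K_\omega)\cap\Gamma_2$ and the non-clique part of $\Gamma_1$, and invoke the layerwise estimate underlying \eqref{eq::1} (taking care not to double count the $\Gamma_0$--$\Gamma_2$ edges already used). Making this refined count exceed $2n-6$ uniformly in $|\Gamma_1|$ is where I expect the argument to require the most care, and it is the step I would write out in full detail.
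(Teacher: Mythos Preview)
Your overall strategy---reach the clique from both $u$ and $v$ in at most two steps, and when that fails charge the missing edges against the budget $m\le 2n-6$ coming from \eqref{eq::2}---is exactly the paper's approach. The partition $V_1,V_2,V_3$ of $\Gamma_1$ used there is a relabelling of your sets $A,B$. However, two steps in your outline do not go through as stated.

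\medskip
\textbf{(i) The case $u\in V(K_\omega)$ is not a one-liner.} You write that a neighbour of $v$ in $K_\omega$ gives a path of length at most~$2$, but such a neighbour other than $u$ need not exist. Concretely, if $V(K_\omega)\cap\Gamma_1\subseteq V_1$ (so $K_\omega$ meets $\Gamma_1$ only inside $N(u)\setminus N(v)$), then $u\in V(K_\omega)$ is forced by maximality, yet $N(v)\cap V(K_\omega)=\{u\}$ and no length-$2$ path is available. You then need a vertex $z\in V_2\cup V_3$ with a neighbour in $V(K_\omega)\setminus\{u\}$ to get a length-$3$ path, and if \emph{that} fails you are back to a missing-edge count of the same depth as your cases (A) and (B). In the paper this is precisely Subcase~A.1, and it is not shorter than the other subcases.

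\medskip
\textbf{(ii) The refined count does not always strictly exceed $2n-6$.} You aim to push the count beyond $2n-6$ to obtain a contradiction, but in several of the extremal configurations the count lands exactly on $2n-6$. When that happens, all the inequalities in the chain are equalities, and this pins down the structure completely: one finds $|V_2|=1$, $|V_3|=0$, $|\Gamma_3|=0$, together with the precise pattern of missing edges. From that rigid structure one either reads off the desired length-$4$ path directly (this happens in your case~(B), i.e.\ Subcase~A.2 of the paper), or one concludes that $G\cong C_3\cdot K_{n-2}$ and then, via Lemma~\ref{lem::2}, that $\dot G\sim C_3^-\cdot K_{n-2}$, contradicting the standing hypothesis $\dot G\nsim C_3^-\cdot K_{n-2}$ (this happens in the $u\in V(K_\omega)$ case with $\Gamma_2=\emptyset$). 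So the endgame is not ``count $>2n-6$'' but ``count $\ge 2n-6$, and analyse equality''; your plan is missing this equality analysis, and without it the argument is incomplete.
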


		\renewcommand\proofname{\it Proof of Claim \ref{claim::1}.}
		\begin{proof} 
			
			If $K_{\omega}$ contains two vertices from different parts of $\Gamma_{1}$, then the result follows immediately. By Claim \ref{claim::0} and by the symmetry of $V_1$ and $V_3$, we only need to consider the following two situations.

			{\flushleft \it Case A.} $V(K_{\omega}) \cap \Gamma_{1} \subseteq V_{1}$. 
			
			Obviously, $V_{2} \cup V_{3}\neq \emptyset$ because $\{uv\}$ is an edge of the negative cycle $C^-$. We consider the following two cases. 
			
			{\flushleft \it Subcase A.1.}  $V(K_{\omega}) \subseteq \Gamma_{0} \cup \Gamma_{1}$.
			
			In this situation, we assert that $u\in V(K_{\omega})$, since otherwise $G[\{u \} \cup V(K_{\omega})]$ is a clique of order $\omega+1$ in $G$, contrary to the maximality of $\omega$. Also,  $v\not\in V(K_\omega)$ because $V(K_\omega)\cap \Gamma_1\subseteq V_1$. If $E(V_2 \cup V_3, V_1 \cap V(K_{\omega})) \neq \emptyset$, then there is clearly a path $P$ of length $3$ containing $\{uv\}$ with both endpoints in $K_\omega$. Thus we may assume that  $E(V_2 \cup V_3, V_1 \cap V(K_{\omega})) = \emptyset$. 
			
			If $V_1\setminus (V_1\cap V(K_{\omega})) = \emptyset$, i.e., $V(K_\omega)=\{u\}\cup V_1$, then the number of edges missing in $\dot{G}$ satisfies the following inequality:
			\begin{equation}
				\begin{aligned}
					m & \ge |\overline{E}(V_2 \cup V_3, V_1 \cap V(K_{\omega}))|+|\overline{E}(\{v\}, V_1)|+|\overline{E}(\{u\}, V_3)|+|\overline{E}(\Gamma_0, \Gamma_2)|\\
					&~~~~+|\overline{E}(\Gamma_0 \cup \Gamma_1, \Gamma_3)|\\
					& = (|V_2|+|V_3|)(\omega-1)+|V_1|+|V_3|+2|\Gamma_{2}|+|\Gamma_0 \cup \Gamma_1||\Gamma_3|\\
					&= (|V_2|+|V_3|)(\omega-1)+|V_1|+|V_3|+2(|\Gamma_{2}|+|\Gamma_{3}|)+(|\Gamma_0 \cup \Gamma_1|-2)|\Gamma_3|\\
					&\ge (|V_2|+|V_3|)(\omega-1)+|V_1|+|V_3|+2(|\Gamma_{2}|+|\Gamma_{3}|)\\
					&= (|V_2|+|V_3|)(\omega-1)+|V_1|+|V_3|+2(n-2-|V_1|-|V_2|-|V_3|)\\
					& =2n-4+(\omega-3)|V_2|+(\omega-2)|V_3|-|V_{1}|\\
					& \ge 2n-4+(\omega-3)(|V_2|+|V_3|)-|V_1|\\
					& \ge 2n-4+(\omega-3)-|V_1|\\
					& = 2n-6.
					\nonumber
				\end{aligned}
			\end{equation}
			Combining this with  (\ref{eq::2}), we obtain  $m=2n-6$, and hence $|V_2|=1$, $|V_3|=0$, $|\Gamma_3| =0$ and $G$ only misses the edges in $\overline{E}(V_2 \cup V_3, V_1 \cap V(K_{\omega}))\cup \overline{E}(\{v\}, V_1)\cup \overline{E}(\{u\}, V_3)\cup \overline{E}(\Gamma_0, \Gamma_2)$. If $\Gamma_2 \neq \emptyset$, then there is clearly a path $P$ of length $4$ containing $\{uv\}$ with  both endpoints in $K_\omega$. If $\Gamma_2 = \emptyset$, then $G$ is isomorphic to $C_{3} \cdot K_{n-2}$. Since $\{uv\}$ is a negative edge of  $C^-$, we see that $\dot{G}[V(C_3)]$ is just the negative cycle $C^-$. If $\dot{G}[V(K_{n-2})]$ is unbalanced, by Lemma \ref{lem::2}, $\dot{G}[V(K_{n-2})]$ contains $C_{2k+1}^-$, which is impossible. Thus $\dot{G}[V(K_{n-2})]$ is balanced, and $\dot{G} \sim C_{3}^{-} \cdot K_{n-2}$, contrary to our assumption. 
			
			If $V_1 \setminus (V_1 \cap V(K_{\omega})) \neq \emptyset$, then each vertex of $V_1 \setminus (V_1 \cap V(K_{\omega}))$ is not adjacent to at least one vertex in $V_1 \cap V(K_{\omega})$. Thus we have
			\begin{equation}
				\begin{aligned}
					m & \ge |\overline{E}(V_2 \cup V_3, V_1 \cap V(K_{\omega}))|+|V_1\setminus(V_1 \cap V(K_{\omega}))|+|\overline{E}(\{v\}, V_1)|\\
					&~~~~+|\overline{E}(\{u\}, V_3)|+|\overline{E}(\Gamma_0, \Gamma_2)|+|\overline{E}(\Gamma_0 \cup \Gamma_1, \Gamma_3)|\\
					& = (|V_2|+|V_3|)(\omega-1)+(|V_1|-\omega+1)+|V_1|+|V_3|+2|\Gamma_{2}|+|\Gamma_0 \cup \Gamma_1||\Gamma_3|\\
					& = (|V_2|+|V_3|)(\omega-1)+(|V_1|-\omega+1)+|V_1|+|V_3|+2(|\Gamma_{2}|+|\Gamma_3|)\\
					&~~~~+(|\Gamma_0 \cup \Gamma_1|-2)|\Gamma_3|\\
					& \ge (|V_2|+|V_3|)(\omega-1)+(|V_1|-\omega+1)+|V_1|+|V_3|+2(|\Gamma_{2}|+|\Gamma_3|)\\
					&\ge (|V_2|+|V_3|)(\omega-1)+(|V_1|-\omega+1)+|V_1|+|V_3|+2(n-2-|V_1|-|V_2|-|V_3|)\\
					& =2n-3+(\omega-3)|V_2|+(\omega-2)|V_3|-\omega\\
					& \ge 2n-3+(\omega-3)(|V_2|+|V_3|)-\omega\\
					& \ge 2n-3+(\omega-3)-\omega\\
					&= 2n-6.
					\nonumber
				\end{aligned}
			\end{equation}
			Combining this with  (\ref{eq::2}), we obtain $m=2n-6$, and hence $|V_2|=1$, $|V_3|=0$, $|\Gamma_3| =0$ and $G$ only misses the edges counting in the right hand side of the first inequality. Therefore, $E(V_1\setminus(V_1 \cap V(K_{\omega}), V_2 \cup V_3)) \neq \emptyset$, and we can easily find  a path $P$ of length $4$ containing $\{uv\}$ with both endpoints in $K_\omega$.
			
			{\flushleft {\it Subcase A.2.} $V(K_{\omega}) \subseteq \Gamma_{1} \cup \Gamma_{2}$.}
			
			In this situation, we assert that $V(K_{\omega}) \cap \Gamma_{2} \neq \emptyset$, since otherwise $G[\{u \} \cup V(K_{\omega})]$ is a clique of order $\omega+1$ in $G$, contrary to the maximality of $\omega$. Let $x \in V_{1} \cap V(K_{\omega})$. If $E(V_2 \cup V_3, V(K_{\omega}) \setminus \{x \} ) \neq \emptyset$, then there is clearly a path $P$ of length $4$ containing $\{uv\}$ whose two endpoints are in $K_\omega$. Thus we may assume that  $E(V_2 \cup V_3, V(K_{\omega}) \setminus \{x \} ) = \emptyset$. 
			
			If $(V_1 \cup \Gamma_{2}) \setminus V(K_{\omega}) = \emptyset$, i.e., $V(K_\omega)=V_1 \cup \Gamma_{2}$, then the number of edges missing in $\dot{G}$ satisfies the following inequality:
			\begin{equation}
				\begin{aligned}
					m & \ge |\overline{E}(V_2 \cup V_3, V(K_{\omega})\setminus \{x \})|+|\overline{E}(v, V_1)|+|\overline{E}(u, V_3)|+|\overline{E}(\Gamma_0, \Gamma_2)|\\
					&~~~~+|\overline{E}(\Gamma_0 \cup \Gamma_1, \Gamma_3)|\\
					& = (|V_2|+|V_3|)(\omega-1)+|V_1|+|V_3|+2|\Gamma_{2}|+|\Gamma_0 \cup \Gamma_1||\Gamma_3|\\
					& = (|V_2|+|V_3|)(\omega-1)+|V_1|+|V_3|+2(|\Gamma_{2}|+|\Gamma_{3}|)+(|\Gamma_0 \cup \Gamma_1|-2)|\Gamma_3|\\
					&\ge  (|V_2|+|V_3|)(\omega-1)+|V_1|+|V_3|+2(|\Gamma_{2}|+|\Gamma_{3}|)\\
					&=(|V_2|+|V_3|)(\omega-1)+|V_1|+|V_3|+2(n-2-|V_1|-|V_2|-|V_3|)\\
					& =2n-4+(\omega-3)(|V_2|)+(\omega-2)|V_3|-|V_1|\\
					& \ge 2n-4+(\omega-3)(|V_2|+|V_3|)-|V_1|\\
					& \ge 2n-4+(\omega-3)-|V_1|\\
					& = 2n-6+(\omega-|V_1|-1)\\
					&\ge 2n-6.
					\nonumber
				\end{aligned}
			\end{equation}
			Combining this with  (\ref{eq::2}), we obtain $m=2n-6$ , and hence $|V_1|=\omega-1$, $|V_2|=1$, $|V_3|=0$, $|\Gamma_3| =0$ and $\dot{G}$ only misses the edges in $\overline{E}(V_2 \cup V_3, V(K_{\omega}) \setminus \{x \}) \cup \overline{E}(\{v\}, V_1)\cup \overline{E}(\{u\}, V_3)\cup \overline{E}(\Gamma_0, \Gamma_2)$. It follows that $E(\{ x \}, V_2 \cup V_3) \neq \emptyset$ and $|V_1 \cap V(K_{\omega})|=\omega-1 \ge n/5 > 1$. Therefore, we can easily find a path of length $4$ containing $\{uv\}$ whose two endpoints are in $K_\omega$.
			
			If $(V_1 \cup \Gamma_{2}) \setminus V(K_{\omega}) \neq \emptyset$, then each vertex in $(V_1 \cup \Gamma_{2}) \setminus V(K_{\omega})$ is not adjacent to at least one vertex in $K_{\omega}$.  Note that $n \ge |V_1|+4$. We have
			\begin{equation}
				\begin{aligned}
					m &\ge |\overline{E}(V_2 \cup V_3, V(K_{\omega})\setminus \{x \})|+|(V_1 \cup \Gamma_{2}) \setminus V(K_{\omega})|+|\overline{E}(v, V_1)|+|\overline{E}(u, V_3)|\\
					&~~~~+|\overline{E}(\Gamma_0, \Gamma_2)|+|\overline{E}(\Gamma_0 \cup \Gamma_1, \Gamma_3)|\\ 
					&= (|V_2|+|V_3|)(\omega-1)+(|V_1|+|\Gamma_{2}|-\omega)+|V_1|+|V_3|+2|\Gamma_{2}|+|\Gamma_0 \cup \Gamma_1||\Gamma_3|\\
					&= (|V_2|+|V_3|)(\omega-1)+2|V_1|+|V_3|-\omega+3(|\Gamma_{2}|+|\Gamma_3|)+(|\Gamma_0 \cup \Gamma_1|-3)|\Gamma_3|\\
					&\ge (|V_2|+|V_3|)(\omega-1)+2|V_1|+|V_3|-\omega+3(|\Gamma_{2}|+|\Gamma_3|)\\
					&\ge (|V_2|+|V_3|)(\omega-1)+2|V_1|+|V_3|-\omega+3(n-2-|V_1|-|V_2|-|V_3|)\\
					&=3n-6-|V_1|+(\omega-4)|V_2|+(\omega-3)|V_3|-\omega\\
					&\ge 3n-6-|V_1|+(\omega-4)(|V_2|+|V_3|)-\omega\\
					&\ge 3n-6-|V_1|+(\omega-4)-\omega\\
					&= 2n-6+(n-|V_1|-4)\\
					&\ge 2n-6.
					\nonumber
				\end{aligned}
			\end{equation}
			Combining this with  (\ref{eq::2}), we obtain $m=2n-6$, and hence $|V_1|=n-4$, $|V_2|=1$, $|V_3|=0$, $|\Gamma_3| =0$ and $\dot{G}$ only misses the edges counting in the right hand side of the first inequality. It follows that $E(\{x \}, V_2 \cup V_3) \neq \emptyset$ and $|V_1 \cap V(K_{\omega})|=\omega-1 \ge n/5 >1$. Therefore, we can easily find a path $P$ of length $4$ containing $\{uv\}$ with both endpoints in $K_\omega$.

			{\flushleft {\it Case B.} $V(K_{\omega}) \cap \Gamma_{1} \subseteq V_{2}$. }
			
			If $|V_{2} \cap V(K_{\omega})| \ge 2$, then the result follows immediately. We may assume that $|V_{2} \cap V(K_{\omega})| = 1$. Let $V_{2} \cap V(K_{\omega})=\{ x \}$. If $E(V_1 \cup (V_2 \setminus \{x \}) \cup V_3, V(K_{\omega}) \setminus \{x \}) \neq \emptyset$, then there is clearly a path of length at most $4$ containing $\{uv\}$ with both endpoints in $K_\omega$. Thus we assume that $E(V_1 \cup (V_2 \setminus \{x \}) \cup V_3, V(K_{\omega}) \setminus \{x \}) = \emptyset$, and the number of edges missing in $\dot{G}$ satisfies the following inequality:

			\begin{figure}
				\begin{center}
					\includegraphics[width=14cm,height=3.4cm]{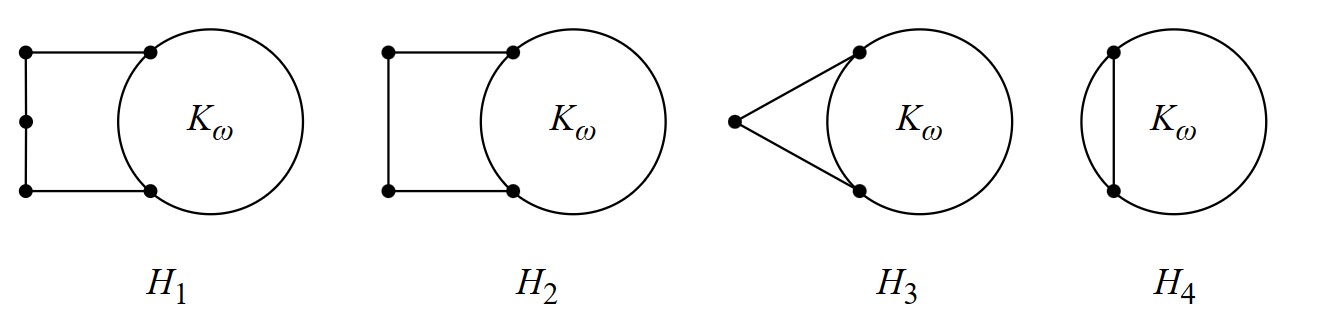}
				\end{center}
				\vskip -0.5cm
				\caption{All possible structures for $P \cup K_{\omega}$.}
				\label{fig4}
			\end{figure}

			\begin{equation}
				\begin{aligned}
					m & \ge |\overline{E}(V_1 \cup (V_2 \setminus \{x \}) \cup V_3, V(K_{\omega}) \setminus \{x \})|+|\overline{E}(v, V_1)|+|\overline{E}(u, V_3)|\\
					&~~~~+|\overline{E}(\Gamma_0, \Gamma_2)|+|\overline{E}(\Gamma_0 \cup \Gamma_1, \Gamma_3)|\\
					& = (|V_1|+|V_2|-1+|V_3|)(\omega-1)+|V_1|+|V_3|+2|\Gamma_{2}|+|\Gamma_0 \cup \Gamma_1||\Gamma_3|\\
					& = (|V_1|+|V_2|-1+|V_3|)(\omega-1)+|V_1|+|V_3|+2(|\Gamma_{2}|+|\Gamma_{3}|)+(|\Gamma_0 \cup \Gamma_1|-2)|\Gamma_3|\\
					& \ge (|V_1|+|V_2|-1+|V_3|)(\omega-1)+|V_1|+|V_3|+2(|\Gamma_{2}|+|\Gamma_{3}|)\\
					&= (|V_1|+|V_2|-1+|V_3|)(\omega-1)+|V_1|+|V_3|+2(n-2-|V_1|-|V_2|-|V_3|)\\
					& =2n-3+(\omega-2)(|V_1|+|V_3|)+(\omega-3)|V_2|-\omega \\
					& \ge 2n-6+(\omega-2)(|V_1|+|V_3|)\\
					&\ge 2n-6.
					\nonumber
				\end{aligned}
			\end{equation}
			Combining this with  (\ref{eq::2}), we obtain $m=2n-6$, and hence $|V_1|=|V_3|=0$, $|V_2|=1$, $|\Gamma_3| =0$ and $G$ is isomorphic to $C_{3} \cdot K_{n-2}$. By using a similar method as in Subcase A.1, we can deduce that $\dot{G} \sim C_{3}^{-} \cdot K_{n-2}$, contrary to our assumption. 
			
			This completes the proof of  Claim \ref{claim::1}.
		\end{proof}
		
		According to Claim \ref{claim::1}, $P \cup K_{\omega}$ is isomorphic to one of the graphs shown in Figure \ref{fig4}, that is, $P \cup K_{\omega}\cong H_{|P \cup K_{\omega}|,a}$ for some $2 \le a \le 5$.  If $\dot{P} \cup \dot{K}_{\omega}$ is unbalanced, by Lemmas \ref{lem::2} and  \ref{lem::2'},  we assert that  $\dot{G}$ contains a negative cycle of length  $2k+1$  because $5 \le 2k+1 \le n/5-1\le \omega-2$, contrary to our assumption. If $\dot{P} \cup \dot{K}_{\omega}$ is balanced, up to switching equivalence, we can assume that $\sigma(e)=+1$ for all $e \in E(P \cup K_{\omega})$. 
		Considering that $5 \le 2k+1 \le n/5-1\le \omega-2$ and $\{uv\}\in E(P)$ is also contained in the negative cycle $C^-$ of length at most $4$, we can easily obtain a negative cycle of length  $2k+1$ in $C^-\cup \dot{P} \cup \dot{K}_{\omega}$, which is a contradiction.
		
		We complete the proof.
	\end{proof}

	\section{Proof of Theorem \ref{thm::2}}
	In this section, we shall give the proof of Theorem \ref{thm::2} with the help of Theorem \ref{thm::1}. Before going further, we need some lemmas.
	
	\begin{lem}(Hong \cite{H})\label{lem::7}
		Let $G$ be a connected graph on $n$ vertices. Then
		$$\rho(G)\le \sqrt{2e(G)-n+1}.$$
	\end{lem}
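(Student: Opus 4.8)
The plan is to establish Hong's bound through a short eigenvalue estimate followed by a degree count, using crucially that a connected graph on $n\ge 2$ vertices has minimum degree at least $1$. Since $G$ is connected, its adjacency matrix $A(G)$ is nonnegative and irreducible, so by the Perron--Frobenius theorem $\rho(G)=\lambda_1(G)=:\lambda$ and there is a positive unit eigenvector $x=(x_1,\dots,x_n)^{\top}$ with $A(G)x=\lambda x$. I would fix a vertex $v$ at which $x$ attains its maximum, that is, $x_v=\max_{1\le i\le n}x_i>0$.

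The first step is a twofold application of the eigenvalue equation $\lambda x_i=\sum_{j\in N(i)}x_j$ at the vertex $v$. Writing $d_j=|N(j)|$ for the degree, this yields
\begin{equation}
\lambda^2 x_v=\lambda\sum_{j\in N(v)}x_j=\sum_{j\in N(v)}\sum_{k\in N(j)}x_k\le x_v\sum_{j\in N(v)}d_j,
\nonumber
\end{equation}
where the inequality uses $x_k\le x_v$ for every $k$. Cancelling the positive factor $x_v$ gives the intermediate estimate $\lambda^2\le\sum_{j\in N(v)}d_j$.

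The crux is to convert the right-hand side into $2e(G)-n+1$. I would partition $V(G)$ into $\{v\}$, the neighbourhood $N(v)$, and the set $W$ of the remaining $n-1-d_v$ vertices. Summing degrees over all vertices gives $2e(G)=d_v+\sum_{j\in N(v)}d_j+\sum_{w\in W}d_w$. Because $G$ is connected with $n\ge 2$, every vertex has degree at least $1$, so $\sum_{w\in W}d_w\ge|W|=n-1-d_v$. Rearranging,
\begin{equation}
\sum_{j\in N(v)}d_j=2e(G)-d_v-\sum_{w\in W}d_w\le 2e(G)-d_v-(n-1-d_v)=2e(G)-n+1.
\nonumber
\end{equation}
Combining this with $\lambda^2\le\sum_{j\in N(v)}d_j$ yields $\rho(G)=\lambda\le\sqrt{2e(G)-n+1}$, as claimed (the case $n=1$ being trivial, since then $\rho(G)=0$).

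The only delicate points are the positivity of the Perron vector, which legitimizes both the choice of $v$ and the cancellation of $x_v$, and the passage in the final display. I expect the degree-accounting step, rather than the spectral estimate, to be where the connectivity hypothesis genuinely enters: it is exactly the fact that every non-neighbour of $v$ carries at least one unit of degree that upgrades the trivial bound $\sum_{j\in N(v)}d_j\le 2e(G)$ to $2e(G)-n+1$. Tracking equality through both inequalities would force all eigenvector components relevant to the double sum to coincide and every non-neighbour of $v$ to have degree one, which is consistent with the known extremal configurations such as complete graphs (where $v$ has no non-neighbour) and stars centred at $v$.
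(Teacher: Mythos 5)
The paper does not prove this lemma at all --- it is quoted directly from Hong's 1988 paper \cite{H} --- so there is no internal argument to compare yours against. Your proof is correct and complete: the Perron--Frobenius step legitimizes choosing the maximal entry $x_v$ and cancelling it, the identity $\lambda^2 x_v=\sum_{j\in N(v)}\sum_{k\in N(j)}x_k$ is a valid double application of the eigenvalue equation, and the degree count $\sum_{j\in N(v)}d_j\le 2e(G)-d_v-(n-1-d_v)$ uses connectivity exactly where it is needed (minimum degree at least $1$; in fact this is the only place the hypothesis enters, and Hong's theorem indeed holds under the weaker assumption $\delta(G)\ge 1$). This is essentially the classical argument for Hong's bound --- equivalently, one bounds the maximum row sum of $A^2$, since $\sum_k (A^2)_{vk}=\sum_{j\in N(v)}d_j$ --- so you have supplied the standard proof of a result the paper only cites.
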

	
	The \emph{frustration index} of a signed graph $\dot{G}$ is the minimum number of edges in $\dot{G}$ whose removal results in a balanced signed graph. 
	
	\begin{lem}(Stani\'c \cite{ZS19})\label{lem::5}
		Let $\dot{G}$ be a connected signed graph on $n$ vertices with  frustration index $l$. Then
		\begin{equation}
			\lambda_1(\dot{G}) \le \sqrt{2(e(\dot{G})-l)-n+1}.
			\nonumber
		\end{equation}
	\end{lem}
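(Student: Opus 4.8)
The plan is to reduce the bound to Hong's inequality (Lemma \ref{lem::7}) applied to the all-positive part of a carefully chosen switching of $\dot{G}$. First I would pass to a representative of the switching class of $\dot{G}$ that realizes the frustration index: since $l$ is the minimum number of edges whose deletion makes $\dot{G}$ balanced, after an appropriate switching (using Lemma \ref{lem::1}) I may assume that $\dot{G}$ has exactly $l$ negative edges, forming a set $F$, and that $\dot{G}-F$ is all-positive. Switching leaves the adjacency spectrum unchanged, so $\lambda_1(\dot{G})$ is unaffected by this reduction. Writing $A(\dot{G})=A^{+}-A^{-}$, where $A^{+}$ is the adjacency matrix of the positive part $G_0:=\dot{G}-F$ and $A^{-}$ that of $F$, a first useful observation is that the minimality of $l$ forces $G_0$ to have no isolated vertices: if some vertex $v$ met only negative edges, then switching at $\{v\}$ would turn all of them positive and strictly decrease the number of negative edges, contradicting the definition of $l$.

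The core of the argument is the comparison
\[
\lambda_1(\dot{G})\le \rho(G_0).
\]
To prove it I would take a unit eigenvector $x$ of $A(\dot{G})$ for $\lambda_1(\dot{G})$ and pass to $y:=(|x_1|,\dots,|x_n|)$, a unit vector with $\rho(G_0)\ge y^{\top}A^{+}y$. Comparing $\lambda_1(\dot{G})=x^{\top}(A^{+}-A^{-})x$ with $y^{\top}A^{+}y$ edge by edge, the contributions of the positive edges can only improve after taking absolute values, while each negative edge contributes a term whose sign depends on whether its endpoints lie in the same part of the cut $U:=\{i:x_i\ge 0\}$. The frustration-minimality of $F$ controls exactly these cut contributions: for every vertex set $U$, the number (and, with more care, the $y$-weighted mass) of negative edges across $U$ cannot exceed that of the positive edges across $U$, since otherwise switching at $U$ would reduce the number of negative edges. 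Turning this cut condition into the required inequality between the weighted negative- and positive-edge sums is the step I expect to be the main obstacle, and it is precisely where the hypothesis that $F$ is a \emph{minimum} frustrating set, rather than an arbitrary balancing set, is essential.

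Granting the comparison, the proof finishes with Hong's bound. Let $G_0^{(1)}$ be the component of $G_0$ of largest spectral radius, on $n_1$ vertices and $m_1$ edges; then $\rho(G_0)=\rho(G_0^{(1)})\le\sqrt{2m_1-n_1+1}$ by Lemma \ref{lem::7}. Because $G_0$ has no isolated vertices, every component $G_0^{(j)}$ is connected on $n_j\ge 2$ vertices and hence satisfies $2m_j\ge n_j$; summing over $j\neq 1$ and using $\sum_j m_j=e(\dot{G})-l$ and $\sum_j n_j=n$ gives $2m_1-n_1\le 2(e(\dot{G})-l)-n$. Combining the two displays yields
\[
\lambda_1(\dot{G})\le \rho(G_0)\le\sqrt{2(e(\dot{G})-l)-n+1},
\]
as claimed. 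The only genuinely delicate point is the weighted cut estimate in the comparison step; everything else is bookkeeping with Hong's inequality and the component structure of $G_0$.
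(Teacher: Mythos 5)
This lemma is quoted in the paper from Stani\'c \cite{ZS19} without proof, so your attempt has to stand entirely on its own, and it does not: the one step that carries all the content of the lemma, the comparison $\lambda_1(\dot{G})\le\rho(G_0)$ for the frustration-minimum switching, is exactly the step you leave open. Your steps (a), (b), (d) are fine --- one may indeed switch so that the negative edges form a minimum frustrating set $F$ with $|F|=l$ (Zaslavsky: the frustration index equals the minimum number of negative edges over the switching class), minimality does force every vertex to have at least as many positive as negative incident edges, hence $G_0$ has no isolated vertices, and your component-wise application of Hong's bound (Lemma \ref{lem::7}) is correct bookkeeping. But these are the routine parts; announcing the spectral comparison and deferring its proof means the lemma is not proved.

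Moreover, the route you sketch toward that comparison is not viable as stated. Frustration-minimality yields only a cardinality statement: for every cut $W$, $|F\cap\partial W|\le |E^{+}\cap\partial W|$, since otherwise switching at $W$ lowers the number of negative edges. This does \emph{not} imply the weighted inequality you need, namely $\sum_{ij\in F,\,\mathrm{across}}y_iy_j\le 2\sum_{ij\in E^{+},\,\mathrm{across}}y_iy_j+\sum_{ij\in F,\,\mathrm{inside}}y_iy_j$. Concretely, take the unbalanced triangle on $\{1,2,3\}$ with $\sigma(12)=-1$, $\sigma(13)=\sigma(23)=+1$ (a minimum switching, $l=1$), the cut $U=\{1\}$, and weights $y_1=y_2=1$, $y_3=\varepsilon$: the cut-cardinality condition holds for every $W$, yet the weighted inequality fails for small $\varepsilon$. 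So any correct argument must exploit that $y=|x|$ comes from the top eigenvector (i.e., use the eigenvalue equations), which your sketch never does. Note also the structural tension your plan hides: if you instead switch by the sign pattern of the eigenvector itself, putting $U=\{i:x_i<0\}$, then the comparison becomes trivial, $\lambda_1=y^{\top}A(\dot{G}_U)y\le y^{\top}A^{+}y\le\rho(H)$, and $e(H)\le e(\dot{G})-l$ again by Zaslavsky's characterization of $l$; but then the positive part $H$ may have isolated vertices (precisely at zeros of the eigenvector), and Hong's inequality genuinely fails in their presence ($K_3$ plus isolated vertices has $\rho=2>\sqrt{2m-n+1}$ for $n\ge 4$), so your final step collapses. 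Each of the two switchings gives you one of the two properties you need --- the easy spectral comparison, or minimum degree at least one --- and the missing idea is how to get both at once; that is where the real proof lies.
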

	
	$$\lambda_1(G)\leqslant\left(1-\frac{1}{\omega(G)}\right)n.$$
	
	
	The \textit{balanced clique number} of a signed graph $\dot{G}$, denoted by $\omega_b(\dot{G})$, is the maximum order of a  balanced clique in $\dot{G}$.
	
	\begin{lem}(Wang, Yan and Qian \cite{WYQ})\label{WYQ}
		Let $\dot{G}$ be a signed graph of order $n$. Then
		$$\lambda_1(\dot{G})\leqslant\left(1-\frac{1}{\omega_b(\dot{G})}\right)n.$$
	\end{lem}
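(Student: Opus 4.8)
The plan is to prove the bound by combining a switching trick with a signed analogue of the Motzkin--Straus theorem. Recall that $\lambda_1(\dot{G})=\max_{\|x\|_2=1}x^{\top}A(\dot{G})x$, and fix a unit eigenvector $x$ attaining this maximum. Writing $S=\operatorname{diag}(s_1,\dots,s_n)$ with $s_i=1$ if $x_i\ge 0$ and $s_i=-1$ otherwise, the signed graph $\dot{G}'$ with adjacency matrix $A(\dot{G}')=SA(\dot{G})S$ is switching equivalent to $\dot{G}$ (it is the switching at $U=\{i:x_i<0\}$); hence $\lambda_1(\dot{G}')=\lambda_1(\dot{G})$, and since switching preserves the underlying graph and the signs of all cycles, also $\omega_b(\dot{G}')=\omega_b(\dot{G})$. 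The point of this switching is that $y:=Sx=(|x_1|,\dots,|x_n|)^{\top}\ge 0$ is now a \emph{nonnegative} unit eigenvector of $A(\dot{G}')$ for $\lambda_1$, so that $\lambda_1(\dot{G})=y^{\top}A(\dot{G}')y$ with $y\ge 0$. This reduction is what lets a simplex argument reach the leading eigenvalue, which for signed graphs need not have a sign-definite eigenvector.

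Next I would establish the signed Motzkin--Straus inequality: for every $z$ in the simplex $\{z\in\mathbb{R}^n:z\ge 0,\ \sum_i z_i=1\}$ one has $z^{\top}A(\dot{G}')z\le 1-1/\omega_b(\dot{G}')$. To prove it, choose a maximizer $z^{\ast}$ of $z^{\top}A(\dot{G}')z$ over the simplex having the smallest possible support $W$, and examine the effect of transferring weight between two vertices $i,j\in W$, i.e.\ replacing $(z_i,z_j)$ by $(z_i+t,z_j-t)$. The resulting value is a quadratic in $t$ whose coefficient of $t^2$ equals $-2\sigma'(ij)$, interpreted as $0$ when $i\not\sim j$. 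If $i$ and $j$ are nonadjacent the function is affine, and if $ij$ is a negative edge it is convex; in either case its maximum over the feasible interval for $t$ is attained at an endpoint, where one of the two coordinates vanishes, producing a maximizer of strictly smaller support and contradicting the minimality of $W$. Hence every pair in $W$ is joined by a positive edge, so $G'[W]$ is an all-positive, and therefore balanced, clique, giving $|W|\le\omega_b(\dot{G}')$. On such a clique $z^{\top}A(\dot{G}')z=\big(\sum_{i\in W}z_i\big)^2-\sum_{i\in W}z_i^2=1-\sum_{i\in W}z_i^2\le 1-1/|W|\le 1-1/\omega_b(\dot{G}')$ by Cauchy--Schwarz.

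Finally I would combine the two ingredients. Put $t=\sum_i y_i=\|y\|_1$; then $z:=y/t$ lies in the simplex, and the Cauchy--Schwarz inequality $\|y\|_1\le\sqrt{n}\,\|y\|_2=\sqrt{n}$ gives $t^2\le n$. Applying the signed Motzkin--Straus inequality to $z$ yields $\lambda_1(\dot{G})=y^{\top}A(\dot{G}')y=t^2\,z^{\top}A(\dot{G}')z\le t^2\big(1-1/\omega_b(\dot{G})\big)\le\big(1-1/\omega_b(\dot{G})\big)n$, as desired. The main obstacle is the signed Motzkin--Straus step: the sign-sensitive convexity analysis of the transfer quadratic is exactly what forces the minimal-support maximizer onto a \emph{balanced} (all-positive) clique rather than an arbitrary clique, and this is precisely where the balanced clique number $\omega_b$---rather than the ordinary clique number---enters the bound. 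The switching reduction to a nonnegative eigenvector in the first step is essential for this argument to apply.
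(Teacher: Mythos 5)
This lemma is quoted in the paper from Wang, Yan and Qian \cite{WYQ}; the paper itself gives no proof of it, so your argument can only be compared with the cited source, not with anything internal. Your proof is correct and self-contained. The switching step is sound: with $S=\operatorname{diag}(s_1,\dots,s_n)$ determined by the signs of a maximizing unit vector $x$, the matrix $SA(\dot{G})S$ is the adjacency matrix of a switching-equivalent signed graph $\dot{G}'$, switching preserves $\lambda_1$ (similarity) and $\omega_b$ (cycle signs, hence balance of induced cliques, are invariant), and $y=Sx\geq 0$ satisfies $y^{\top}A(\dot{G}')y=\lambda_1(\dot{G})$. The signed Motzkin--Straus step also holds up: in the weight-transfer quadratic the only $t^2$-contribution comes from the pair $\{i,j\}$ itself, with coefficient $-2\sigma'(ij)$, so a negative edge inside the minimal support $W$ makes the function strictly convex and contradicts that the interior point $t=0$ is a global maximizer, while a non-edge makes it affine and lets you move to an endpoint without decreasing the value, contradicting minimality of $|W|$; hence $G'[W]$ is an all-positive, therefore balanced, clique, and Cauchy--Schwarz gives $z^{\top}A(\dot{G}')z\leq 1-1/|W|\leq 1-1/\omega_b(\dot{G})$. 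The final combination, using $\|y\|_1^2\leq n\|y\|_2^2=n$ together with $1-1/\omega_b(\dot{G})\geq 0$, is exactly Nikiforov's argument for the unsigned bound $\lambda_1(G)\leq(1-1/\omega(G))n$. In substance this is the route of the cited reference as well: reduce by switching to a nonnegative extremal vector, then run a Motzkin--Straus-type analysis in which only balanced (all-positive after switching) cliques can carry the simplex maximum --- which is precisely where $\omega_b(\dot{G})$, rather than the clique number of the underlying graph, enters the bound.
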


	\begin{lem}\label{lem::8}
		Let $n \ge 4$ be integer. If $\dot{G} \sim C_{3}^- \cdot K_{n-2}$, then the characteristic polynomial of $\dot{G}$ is 
		\begin{equation}
			P_{\dot{G}}(x)=(x+1)^{n-4}(x-1)(x^3-(n-5)x^2-(2n-5)x+n-5).
			\nonumber
		\end{equation}
		In particular,
		\begin{equation}
			\rho(\dot{G}) > n-3.
			\nonumber
		\end{equation}
	\end{lem}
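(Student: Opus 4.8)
The plan is to exploit the high symmetry of $C_3^- \cdot K_{n-2}$ by working with a convenient switching representative together with an equitable partition of its vertex set. Since switching equivalent signed graphs are cospectral (switching acts by conjugation with a diagonal $\pm 1$ matrix), I may fix the representative in which every edge of the clique $K_{n-2}$ is positive and the negative triangle $C_3^-$ carries the signs $+,+,-$. Concretely, label the two triangle vertices lying outside the clique by $a,b$, let $c$ be the clique vertex of the triangle, and let $d_1,\dots,d_{n-3}$ be the remaining clique vertices; then $a \mathop{\sim}\limits^{-} b$, $a \mathop{\sim}\limits^{+} c$, $b \mathop{\sim}\limits^{+} c$, and all edges inside $\{c,d_1,\dots,d_{n-3}\}$ are positive.

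First I would peel off the eigenvalues forced by symmetry. Any vector supported on $\{d_1,\dots,d_{n-3}\}$ whose entries sum to zero is an eigenvector for the eigenvalue $-1$, because each $d_i$ sees only $c$ (contributing $0$) and the other $d_j$ (contributing $-x_{d_i}$); this accounts for the factor $(x+1)^{n-4}$. The vector assigning $+1$ to $a$, $-1$ to $b$, and $0$ elsewhere is an eigenvector for the eigenvalue $1$ (here the negative sign of $ab$ is exactly what yields $+1$ rather than $-1$), giving the factor $(x-1)$. The partition $\{a\},\{b\},\{c\},\{d_1,\dots,d_{n-3}\}$ is equitable in the signed sense, and after using the $a\leftrightarrow b$ symmetry the symmetric part reduces to the $3\times 3$ quotient matrix
\[
B=\begin{pmatrix} -1 & 1 & 0 \\ 2 & 0 & n-3 \\ 0 & 1 & n-4 \end{pmatrix},
\]
whose characteristic polynomial I would expand to obtain exactly $x^3-(n-5)x^2-(2n-5)x+(n-5)$. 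Multiplying the three factors gives $P_{\dot G}(x)$, and the dimension count $(n-4)+1+3=n$ confirms that no eigenvalue has been missed.

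For the bound $\rho(\dot G)>n-3$, observe that the largest eigenvalue $\lambda_1(\dot G)$ is a root of the cubic factor $f(x)=x^3-(n-5)x^2-(2n-5)x+(n-5)$, since the remaining factors contribute only $\pm 1$. I would evaluate $f$ at $x=n-3$: grouping the first two terms as $2(n-3)^2$ and simplifying yields $f(n-3)=-2<0$. As $f$ has positive leading coefficient, $f(x)\to+\infty$ as $x\to+\infty$, so $f$ has a root strictly greater than $n-3$, and this root is $\lambda_1(\dot G)$. Since $\rho(\dot G)=\max\{\lambda_1(\dot G),-\lambda_n(\dot G)\}\ge \lambda_1(\dot G)$, the strict inequality $\rho(\dot G)>n-3$ follows immediately.

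There is no serious obstacle here, as the statement is essentially a computation; the only care required is bookkeeping. One must choose the switching representative so that all signs are explicit, verify that the partition is equitable for the \emph{signed} adjacency matrix (the sign of the edge $ab$ is precisely what distinguishes this from the ordinary-graph situation and flips the sign of the associated eigenvalue from $-1$ to $+1$), and carry out the $3\times 3$ determinant expansion without arithmetic slips. Correctly isolating the $\pm 1$ contributions also makes the final step for $\rho(\dot G)>n-3$ immediate.
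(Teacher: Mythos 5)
Your proof is correct, but it takes a genuinely different route from the paper's. The paper fixes the same switching representative (unique negative edge between the two non-clique triangle vertices) and then computes $P_{\dot{G}}(x)=\det(xI-M)$ head-on: it writes $M$ in $3\times 3$ block form, splits the middle column to expand the determinant into a sum of three block determinants, and reads off the factorization from that single calculation. You instead decompose $\mathbb{R}^n$ into three $M$-invariant subspaces: the sum-zero vectors supported on $d_1,\dots,d_{n-3}$ (eigenvalue $-1$, multiplicity $n-4$), the span of the vector that is $+1$ at $a$, $-1$ at $b$ and $0$ elsewhere (eigenvalue $+1$; your remark that the negative sign of $ab$ is what turns the usual $-1$ into $+1$ is exactly right), and the $3$-dimensional space of vectors constant on $\{a,b\}$ and on $\{d_1,\dots,d_{n-3}\}$, on which $M$ acts by your quotient matrix $B$. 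I verified $B$: in coordinates $(s,t,w)$ the image is $(-s+t,\,2s+(n-3)w,\,t+(n-4)w)$, so $B$ is as you wrote, and
\begin{equation}
\det(xI-B)=(x+1)\bigl(x^2-(n-4)x-(n-3)\bigr)-2x+2(n-4)=x^3-(n-5)x^2-(2n-5)x+(n-5),
\nonumber
\end{equation}
which is the paper's cubic. Since your three subspaces are mutually orthogonal (not just of complementary total dimension — this is the one point worth stating explicitly, as the dimension count alone needs independence), their direct sum is all of $\mathbb{R}^n$ and the product of the three factors is the full characteristic polynomial. Your final step coincides with the paper's: $f(n-3)=-2<0$ together with the positive leading coefficient gives a root of $f$ exceeding $n-3$, hence $\rho(\dot{G})\ge\lambda_1(\dot{G})>n-3$. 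What your approach buys is structural insight — it explains where the factors $(x+1)^{n-4}$ and $(x-1)$ come from and reduces the computation to a $3\times 3$ determinant; what the paper's approach buys is a single self-contained determinant manipulation with no invariant-subspace bookkeeping.
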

	
	\renewcommand\proofname{\it Proof.}
	
	\begin{proof}
		Let $V(C_{3}^-)=\{ v_1, v_2, v_3 \}$ and $V(C_{3}^-) \cap V(K_{n-2}) = \{ v_3 \}$. Up to switching equivalence, we can assume that $\{v_1v_2\}$  is the unique negative edge in  $\dot{G}$. Then  the adjacency matrix of $\dot{G}$ is given by
		$$
		M=\begin{pmatrix}
			I_2-J_2&\boldsymbol{j}_2&O\\
			\boldsymbol{j}_2^{T}&0&\boldsymbol{j}_{n-3}^{T}\\
			O&\boldsymbol{j}_{n-3}&J_{n-3}\\
		\end{pmatrix},
		$$
		where $I$, $J$ and $\boldsymbol{j}$ denote the identity matrix, the all-ones matrix, and the all-ones vector, respectively. Now the characteristic polynomial of $\dot{G}$ is
		$$
		\begin{aligned}
			P_{\dot{G}}(x)&=
			\left|\begin{matrix}
				(x-1)I_2+J_2&-\boldsymbol{j}_2&O\\
				-\boldsymbol{j}_2^{T}&x&-\boldsymbol{j}_{n-3}^{T}\\
				O&-\boldsymbol{j}_{n-3}&xI_{n-3}-J_{n-3}\\
			\end{matrix}\right|\\
			&=\left|\begin{matrix}
				(x-1)I_2+J_2&-\boldsymbol{j}_2&O\\
				-\boldsymbol{j}_2^{T}&x&-\boldsymbol{j}_{n-3}^{T}\\
				O&0&xI_{n-3}-J_{n-3}\\
			\end{matrix}\right|+
			\left|\begin{matrix}
				(x-1)I_2+J_2&0&O\\
				-\boldsymbol{j}_2^{T}&x&-\boldsymbol{j}_{n-3}^{T}\\
				O&-\boldsymbol{j}_{n-3}&xI_{n-3}-J_{n-3}\\
			\end{matrix}\right|\\
			&~~~~-
			\left|\begin{matrix}
				(x-1)I_2+J_2&0&O\\
				-\boldsymbol{j}_2^{T}&x&-\boldsymbol{j}_{n-3}^{T}\\
				O&0&xI_{n-3}-J_{n-3}\\
			\end{matrix}\right|\\
			&=(x+2)(x-1)^2(x-(n-4))(x+1)^{n-4}+(x-1)(x+1)(x-(n-3))(x+1)^{n-3}\\	&~~~~-x(x-1)(x+1)(x-(n-4))(x+1)^{n-4}\\
			&=(x+1)^{n-4}(x-1)(x^3-(n-5)x^2-(2n-5)x+n-5).
		\end{aligned}
		$$
		It is easy to see that $\lambda_1(\dot{G})$ is the zero of the polynomial $f(x):=x^3-(n-5)x^2-(2n-5)x+n-5$.  By a simple calculation, we see that $f(n-3)=-2<0$. Therefore, $\rho(\dot{G})\geq \lambda_1(\dot{G})>n-3$, and the result follows.  
	\end{proof}

	Now we are in a position to give the proof of Theorem \ref{thm::2}.
	
	\renewcommand\proofname{\it Proof of Theorem \ref{thm::2}.}
	\begin{proof}
		Suppose that $\dot{G}=(G,\sigma)$  is an unbalanced $C_{2k+1}^-$-free signed graph of order $n$ with the maximum spectral radius, where $3\leq k\leq (n-11)/10$. Then  
		\begin{equation}\label{eq::3}
			\rho(\dot{G}) \ge \rho(C_{3}^{-} \cdot K_{n-2})>n-3
		\end{equation}
		by Lemma \ref{lem::8}. We claim that  $\rho(\dot{G})=\lambda_1(\dot{G})$. If not, we have $\rho(\dot{G})=\max\{\lambda_1(\dot{G}),$ $-\lambda_n(\dot{G})\}=-\lambda_n(\dot{G})$.  Let $\dot{G}_{1}=-\dot{G}$. Clearly,  $\lambda_1(\dot{G}_{1})=-\lambda_n(\dot{G})$. Since $\dot{G}$ is $C_{2k+1}^-$-free,  $\dot{G}_{1}$ contains no positive cycles of length $2k+1$, and it   follows that $\omega_b(\dot{G}_{1}) \le 2k \le (n-11)/5$.  According to Lemma \ref{WYQ}, 
		$$
		\rho(\dot{G})=-\lambda_n(\dot{G})=\lambda_1(\dot{G_1}) \le \left(1-\frac{1}{\omega_b(\dot{G_1})}\right)n \le n-\frac{5}{1-\frac{11}{n}}<n-3,
		$$
		which is a contradiction. \qed
		
		First assume that $\dot{G}$ is connected. By Lemma \ref{lem::5}, we have
		\begin{equation*}
			\rho(\dot{G}) 
			\le \sqrt{2(e(\dot{G})-1)-n+1}
			= \sqrt{2e(\dot{G})-n-1}.
		\end{equation*}
		Combining this with \eqref{eq::3} yields that 
		\begin{equation*}
			e(\dot{G}) \ge \frac{\rho(\dot{G})^2+n+1}{2}> \frac{(n-3)^2+n+1}{2}= \frac{n(n-1)}{2}-2n+5.
		\end{equation*}
		Therefore, $e(\dot{G}) \ge \frac{n(n-1)}{2}-2(n-3)$. Since $\dot{G}$ is $C_{2k+1}^-$-free and $3\leq (n-11)/10<n/10-1$, by Theorem \ref{thm::1}, we conclude that  $\dot{G}$ is switching equivalent to $C_3^-\cdot K_{n-2}$, and the result follows.
		
		Now assume that  $\dot{G}$ is disconnected. Since $\rho(\dot{G})> n-3$, we see that $\dot{G}$ has exactly two components, and one of them consists of an isolated vertex. Let $\dot{G}_{2}$ be the nontrivial component of $\dot{G}$. Then $\dot{G}_{2}$ is of order $n-1$ and $\rho(\dot{G}_2)=\rho(\dot{G})>n-3$.  By Lemma \ref{lem::8}, 
		\begin{equation}
			\begin{aligned}
				e(\dot{G}_2) &\ge \frac{\rho(\dot{G}_2)^2+n+1}{2}> \frac{(n-3)^2+n+1}{2}> \frac{(n-1)(n-2)}{2}-2(n-1)+6.
				\nonumber
			\end{aligned}
		\end{equation}
		By Theorem \ref{thm::1}, $\dot{G}_2$ contains a $C_{2k+1}^{-}$ because $3 \le k \le (n-1)/10-1$. Thus $\dot{G}$ also contains $C_{2k+1}^{-}$, and we obtain a contradiction.
		
		This completes the proof.
	\end{proof}

\end{document}